\numberwithin{equation}{section}
                        \theoremstyle{plain}
\newcommand\no[1]{}
\newtheorem{theorem}{Theorem}[section]
\newtheorem{lemma}[theorem]{Lemma}
\newtheorem{proposition}[theorem]{Proposition}
\theoremstyle{definition}
\def\BZ{\mathbb Z}
\def\ve{\varepsilon}
\def\be { \begin{equation} }
\def\ee { \end{equation} }
\begin{document}

\title[On the braid index of a two-bridge knot]
{On the braid index of a two-bridge knot}

\begin{abstract}
In this paper, we consider two properties on the braid index of a two-bridge knot. 
We prove an inequality on the braid indices of two-bridge knots if there exists an epimorphism between their knot groups. 
Moreover, we provide the average braid index of all two-bridge knots with a given crossing number. 
\end{abstract}

\author{Masaaki Suzuki}
\address{Department of Frontier Media Science,
  Meiji University,
  4-21-1 Nakano, Nakano-ku, Tokyo, 164-8525, Japan}
\email{mackysuzuki@meiji.ac.jp}

\author{Anh T. Tran}
\address{Department of Mathematical Sciences,
The University of Texas at Dallas,
Richardson, TX 75080-3021, USA}
\email{att140830@utdallas.edu}

\thanks{2020 {\it Mathematics Subject Classification}.
Primary 57K10, 57K31.}

\thanks{{\it Key words and phrases.\/}
braid index, crossing number, epimorphism, minimal knot, two-bridge knot.}

\maketitle

\section{Introduction}

It is known as Alexander theorem that each knot can be represented as the closure of a braid. 
The minimal number of strings to represent a knot $K$ in this way is called the braid index of $K$, which is denoted by ${\rm braid} (K)$. 
For a non-trivial knot $K$, it is easy to see that ${\rm braid} (K) \geq 2$. 
In this paper, we consider two properties on the braid index of a two-bridge knot. 

First, we consider the relationship between the braid indices of two two-bridge knots if there exists an epimorphism between their knot groups. 
Let $G(K)$ be the knot group of $K$, namely, the fundamental group of the exterior of $K$ in $S^3$. 
We write $K \geq K'$ if there exists an epimorphism $\varphi : G(K) \to G(K')$. 
Boileau, Boyer, Reid, and Wang in \cite{BBRW} showed that if $K$ is a two-bridge knot and $K \geq K'$, then $K'$ is also a two-bridge knot or the trivial knot. 
In the previous papers \cite{SZK} and \cite{ST1}, it was shown that 
if $K$ is a two-bridge knot and $K \geq K'$ then the following inequalities on the crossing numbers $c(K),c(K')$ and the genera $g(K),g(K')$ hold:
\begin{align*}
& c(K) \geq 3 \, c(K'), \\
& g(K) \geq 3 \, g(K') -1.
\end{align*} 
In this paper, we prove a similar inequality on the braid indices ${\rm braid} (K), {\rm braid} (K')$: 
\[
{\rm braid} (K) \geq  3 \, {\rm braid}(K') - 4 .
\]
As a corollary, we show that if ${\rm braid} (K) \leq 4$, then $K'$ is a torus knot.  

Second, we consider the average braid index of two-bridge knots with respect to crossing number. 
In the previous paper \cite{ST2}, the average genus $\overline{g}_{c}$ of all the two-bridge knots with crossing number $c$ is given by: 
\[
\overline{g}_{c} =
\left\{
\begin{array}{ll}
\displaystyle{ \frac{3c+1}{12} +  \frac{c-5}{2^c - 4}} & c \equiv 0 \pmod{2} 
\smallskip \\
\displaystyle{ \frac{3c+1}{12}+  \frac{1}{3 \cdot 2^{(c-3)/2}}} & c \equiv 1 \pmod{4} 
\smallskip\\
\displaystyle{ \frac{3c+1}{12} +  \frac{2^{(c+1)/2} - 3 c + 11}{12 \left(2^{c-3} + 2^{(c-3)/2} + 1\right)}} & c \equiv 3 \pmod{4}
\end{array}
\right. 
\]
for $c \geq 3$.
In particular, $\overline{g}_c \sim \displaystyle{\frac{c}{4}  + \frac{1}{12}}$ as $c \to \infty$. 
Similarly, in this paper we show that the average braid index of all the two-bridge knots with crossing number $c$ is given by:
\[
\overline{{\rm braid}}_c =
\left\{
\begin{array}{ll}
\displaystyle{\frac{3c+11}{9}  + \frac{2c-4}{3(2^{c-2}-1)}} & c \equiv 0 \pmod{2} 
\smallskip \\
\displaystyle{\frac{3c+11}{9}   - \frac{2^{(c+3)/2} + 9 c - 19}{9 (2^{c-2} + 2^{(c-1)/2})}} & c \equiv 1 \pmod{4} 
\smallskip\\
\displaystyle{\frac{3c+11}{9}   - \frac{2^{(c+3)/2} + 3 c - 5}{9 (2^{c-2} + 2^{(c-1)/2}+2)}} & c \equiv 3 \pmod{4}
\end{array}
\right.
\]
for $c \geq 3$. 
In particular,  $\overline{{\rm braid}}_c \sim \displaystyle{\frac{c}{3} + \frac{11}{9}}$ as $c \to \infty$.

The paper is organized as follows. In Section \ref{prelim} we review two-bridge knots, their crossing numbers and braid indices, and the Ohtsuki-Riley-Sakuma construction of epimorphisms between two-bridge knot groups. In Section \ref{sect_epi} we prove the inequality on the braid indices of two-bridge knots if there exists an epimorphism between their knot groups. In Section \ref{sect_smallbraid} we determine the minimality of two-bridge knots with small braid index. Finally, in Section \ref{avg} we compute the average braid index of all the two-bridge knots with a given crossing number.

\section{Two-bridge knots and their braid indices} \label{prelim}

In this section, we review some properties of two-bridge knots. 
Especially, we recall the braid index of a two-bridge knot and 
the Ohtsuki-Riley-Sakuma construction of epimorphisms between two-bridge knot groups. 

It is known that a two-bridge knot corresponds to a rational number. 
Then $K(r)$ stands for the two-bridge knot corresponding to a rational number $r$. 
Furthermore, a rational number $r$ can be expressed as a continued fraction 
\[
 r = 
 [a_1,a_2, \ldots, a_{m-1},a_m] = 
\dfrac{1}{a_1 + \dfrac{1}{a_2 + \dfrac{1}{\ddots \dfrac{1}{a_{m-1} + \dfrac{1}{a_m}}}}} .
\]
Moreover, every two-bridge knot is represented by a reduced even continued fraction, 
that is a continued fraction of the form $[2 a_1, 2 a_2, . . . , 2 a_{2m}]$ where each $a_i \neq 0$. 
It is known that $K([2a_1,2a_2, \ldots, 2a_{2m}])$ and $K([-2a_{2m}, \ldots,- a_2, -2a_1])$ are equivalent  
and that $K([-2a_1,-2a_2, \ldots, -2a_{2m}]) =K([2a_{2m},\ldots,2 a_2, 2a_1])$ is their mirror image. 

Let $[\mathbf{a}] = [2a_1, 2a_2, \cdots, 2a_{2m}]$ be a reduced even continued fraction corresponding to  $K = K([\mathbf{a}])$. 
Then the crossing number $c(K)$ and the braid index $\text{braid}(K)$ of $K$ are   
\begin{eqnarray*}
c(K) &=& \left(\sum_{i=1}^{2m} 2|a_i| \right) -  t(\mathbf{a}), \\
\text{braid}(K) &=& \left(\sum_{i=1}^{2m} |a_i| \right) -  t(\mathbf{a}) +1,
\end{eqnarray*}
where $t(\mathbf{a})$ is the number of sign changes in the sequence $\mathbf{a} = (2 a_1, 2 a_2, \ldots, 2 a_{2m})$. This implies that $\mathrm{braid}(K) = \frac{1}{2} c(K) +1 - \frac{1}{2}  t(\mathbf{a})$. 
Note that $t(\mathbf{a}) \le 2m-1$.

All epimorphisms between two-bridge knot groups are completely characterized by the following distinguished theorem,  
which plays an important role in Sections \ref{sect_epi} and \ref{sect_smallbraid}. 

\begin{theorem}
[Ohtsuki-Riley-Sakuma \cite{ORS}, Agol \cite{agol}, Aimi-Lee-Sakuma \cite{ALS}]\label{thm:ors}
Let $K(r), K(\tilde{r})$ be $2$-bridge knots, 
where $r = [a_1,a_2,\ldots,a_m]$. 
There exists an epimorphism $\varphi : G(K(\tilde{r})) \to G(K(r))$ 
if and only if $\tilde{r}$ can be written as 
\[
 \tilde{r} = 
[\varepsilon_1 {\bf a}, 2 c_1, 
\varepsilon_2 {\bf a}^{-1}, 2 c_2, 
\varepsilon_3 {\bf a}, 2 c_3, 
\varepsilon_4 {\bf a}^{-1}, 2 c_4, 
\ldots, 
\varepsilon_{2n} {\bf a}^{-1}, 2 c_{2n}, \varepsilon_{2n+1} {\bf a}] , 
\]
where ${\bf a} = (a_1, a_2,\ldots,a_m), {\bf a}^{-1} = (a_m, a_{m-1},\ldots,a_1)$, 
$\varepsilon_i = \pm 1 \, \, (\varepsilon_1 = 1)$, $c_i \in {\mathbb Z}$, and $n \in {\mathbb N}$. 
\end{theorem}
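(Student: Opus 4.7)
The plan is to prove the two directions separately, since the forward and backward implications rest on quite different ideas. The ``if'' direction is a concrete construction, while the ``only if'' direction requires the full machinery of essential surfaces in two-bridge knot complements.

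For the ``if'' direction I would follow the strategy of Ohtsuki--Riley--Sakuma. Given the presentation of $\tilde r$ in the stated palindromic/alternating form, I would build a $\pi_1$-surjective map $f : S^3 \setminus K(\tilde r) \to S^3 \setminus K(r)$ directly from the continued fraction data. The idea is that each block $\varepsilon_i {\bf a}$ or $\varepsilon_i {\bf a}^{-1}$ plays the role of one ``copy'' of the $4$-plat diagram of $K(r)$, with the integers $2c_i$ recording how adjacent copies are glued along $2$-bridge spheres; the signs $\varepsilon_i$ and the palindromic reversal ${\bf a}^{-1}$ encode whether a copy is inserted straight or flipped. One then checks that the induced homomorphism $\varphi = f_\ast$ sends a Wirtinger meridian of $K(\tilde r)$ to a meridian of $K(r)$, and that the two bridge meridians of $K(r)$ are both hit (the normalization $\varepsilon_1 = 1$ ensures this for one of them, and the alternation for the other), hence $\varphi$ is surjective.

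For the ``only if'' direction I would invoke Agol's theorem together with the completion by Aimi--Lee--Sakuma. The starting point is to promote the algebraic epimorphism $\varphi$ to a topological map of complements (using that two-bridge knot groups are $2$-generated and that meridians map to meridians up to conjugation and inversion). Agol's work then shows that the preimage of a $2$-bridge sphere for $K(r)$ is an essential surface in $S^3 \setminus K(\tilde r)$, and the Hatcher--Thurston classification of incompressible surfaces in two-bridge knot complements forces the continued fraction of $\tilde r$ to be assembled out of translates of ${\bf a}$ and ${\bf a}^{-1}$ separated by even integers $2c_i$. Aimi--Lee--Sakuma's contribution handles the remaining non-generic cases, in particular when the associated surface is non-orientable or when small parameters cause degenerations.

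The main obstacle is clearly the ``only if'' direction, since the ``if'' direction amounts to an explicit combinatorial check. The hard part is twofold: first, passing from a purely group-theoretic map $\varphi$ to a controlled topological model that separates the complement along essential surfaces; second, extracting from the combinatorics of these surfaces the exact palindromic pattern $\varepsilon_i {\bf a}^{\pm 1}$ with the constraint $\varepsilon_1 = 1$ and no constraint on the parities of the $c_i$, while excluding every other continued fraction that might a priori give rise to an epimorphism. Since this theorem is the combined work of three papers and is used here as a black box, my realistic plan is to cite Theorem~\ref{thm:ors} rather than re-derive it, and focus effort on the inequality for the braid index that it enables.
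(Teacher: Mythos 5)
Your conclusion matches the paper exactly: Theorem~\ref{thm:ors} is stated there as a cited external result (Ohtsuki--Riley--Sakuma for the construction, Agol and Aimi--Lee--Sakuma for the converse) with no proof supplied, and your sketch of how the two directions are established in the literature is consistent with that attribution. Treating it as a black box and directing effort at the braid-index inequality is precisely the paper's approach.
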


\section{Braid index and epimorphism}\label{sect_epi}

In this section, we show the following theorem, which is one of the main theorems of this paper. 

\begin{theorem} \label{ineq}
Let $K$ be a two-bridge knot and $K'$ a knot. 
If $K \geq K'$, then we have 
\[
{\rm braid} (K) \geq  3 \, {\rm braid}(K') - 4 .
\]
\end{theorem}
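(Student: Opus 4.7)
My plan is to combine the structural description of $\tilde r$ supplied by Theorem \ref{thm:ors} with the explicit braid-index formula recalled in Section \ref{prelim}, and to bound the number of sign changes in $\tilde r$ from above.

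I would first reduce to the case where $K'$ is a non-trivial two-bridge knot. By \cite{BBRW}, $K'$ is either trivial or two-bridge; if $K'$ is trivial, then ${\rm braid}(K') = 1$ and the inequality ${\rm braid}(K)\geq 2 > -1$ holds automatically. Otherwise write $r = [2b_1, 2b_2, \ldots, 2b_{m}]$ as a reduced even continued fraction for $K'$, with $m$ even and each $b_j\neq 0$, and set $\mathbf{a} = (2b_1,\ldots,2b_{m})$; then
\[
{\rm braid}(K') = \sum_{j=1}^{m} |b_j| - t(\mathbf{a}) + 1.
\]
Applying Theorem \ref{thm:ors}, $K = K(\tilde r)$ with
\[
\tilde r = [\varepsilon_1 \mathbf{a}, 2c_1, \varepsilon_2 \mathbf{a}^{-1}, 2c_2, \ldots, \varepsilon_{2n} \mathbf{a}^{-1}, 2c_{2n}, \varepsilon_{2n+1} \mathbf{a}], \quad n \geq 1.
\]

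In the generic case where every $c_k \neq 0$, this expression is already a reduced even continued fraction for $K$, with $(2n+1)m + 2n$ entries. The braid-index formula then gives
\[
{\rm braid}(K) = (2n+1)\sum_{j=1}^{m} |b_j| + \sum_{k=1}^{2n} |c_k| - t(\tilde{\mathbf{a}}) + 1,
\]
and the crucial estimate is
\[
t(\tilde{\mathbf{a}}) \;\leq\; (2n+1)\,t(\mathbf{a}) + 4n.
\]
This holds because each of the $2n+1$ blocks $\varepsilon_i\mathbf{a}^{\pm 1}$ contributes exactly $t(\mathbf{a})$ internal sign changes (reversal and overall negation preserve the count), while each of the $4n$ boundaries between a block and an adjacent $2c_k$ contributes at most one further sign change. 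Using $|c_k| \geq 1$ and substituting,
\[
{\rm braid}(K) \;\geq\; (2n+1)\bigl({\rm braid}(K') - 1\bigr) + 2n - 4n + 1 \;=\; (2n+1)\,{\rm braid}(K') - 4n.
\]
For $n=1$ this is exactly the desired inequality; for $n\geq 2$, since $K'$ is non-trivial we have ${\rm braid}(K') \geq 2$, and the excess $(2n+1)\,{\rm braid}(K') - 4n - (3\,{\rm braid}(K') - 4) = (2n-2)({\rm braid}(K') - 2) \geq 0$.

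The hard part will be the case in which some $c_k$ vanishes: then $\tilde r$ is not in reduced even form and must be simplified via the identity $[\ldots, x, 0, y, \ldots] = [\ldots, x+y, \ldots]$, which can cascade when adjacent sign choices $\varepsilon_i, \varepsilon_{i+1}$ force $x+y=0$. My intent is either (a) to argue that the ORS presentation can be renormalized so all $c_k \neq 0$, thereby reducing to the generic case, or (b) to account, at each cancellation, for the joint change in $\sum|\tilde{a}_\ell|$ and $t(\tilde{\mathbf{a}})$ and verify that the bound survives. The slack $(2n-2)({\rm braid}(K')-2)\geq 0$ available for $n\geq 2$ should leave room to absorb these corrections.
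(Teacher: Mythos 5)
Your generic case (all $c_k \neq 0$) is correct and is essentially the paper's argument: same braid-index formula, same sign-change bound $t(\tilde{\mathbf{a}}) \le (2n+1)t(\mathbf{a}) + 4n$, and the same use of the slack $(2n-2)({\rm braid}(K')-2)\ge 0$ for $n \ge 2$. The reduction to $K'$ a non-trivial two-bridge knot is also fine. However, the proposal has a genuine gap exactly where you flag it: the case where some $c_k = 0$ is not proved, and neither of your two suggested exits closes it as stated. Option (a) fails: one cannot in general renormalize the Ohtsuki--Riley--Sakuma form so that all $c_k\neq 0$; a presentation such as $(\mathbf{a}, 0, \mathbf{a}^{-1}, 2c_2, \mathbf{a})$ genuinely contains a zero that must be eliminated by merging blocks, not by re-choosing parameters. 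Option (b) is the right path but requires an input you do not supply.

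The missing ingredient is the normalization (established in \cite{SZK} and invoked by the paper) that whenever $c_j = 0$ one may assume $\varepsilon_{j+1} = \varepsilon_j$. With this, the feared cascade never occurs: deleting the $0$ merges the last entry $x$ of $\varepsilon_j\mathbf{a}^{\pm 1}$ with the first entry of $\varepsilon_{j+1}\mathbf{a}^{\mp 1}$, and since $\mathbf{a}^{-1}$ is the reversal of $\mathbf{a}$ these two entries are equal, so the merged entry is $2x \neq 0$. Consequently the result is already a reduced even continued fraction, the total $\sum_\ell |\tilde a_\ell|$ of half-entries is unchanged by the merge, and no sign change is created at that junction, so the bound sharpens to $t(\tilde{\mathbf{a}}) \le (2n+1)t(\mathbf{a}) + 2\#\{c_j\neq 0\}$. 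The accounting then goes through uniformly: writing $z=\#\{c_j\neq 0\}\le 2n$, one gets ${\rm braid}(K) \ge (2n+1)({\rm braid}(K')-1) + z - 2z + 1 \ge (2n+1)\,{\rm braid}(K') - 4n$, i.e.\ the same final bound as in your generic case, so each vanishing $c_j$ costs a unit in $\sum|c_j|$ but saves two units in the sign-change allowance. You need to either prove or properly cite the $\varepsilon_{j+1}=\varepsilon_j$ normalization, and then carry out this bookkeeping, to complete the proof.
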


\begin{proof}
Let $K'= K([\mathbf{a}])$, where $\mathbf{a} = (2a_1, 2a_2, \cdots, 2a_{2m})$ is  a reduced even continued fraction. 
Then the braid index of $K'$ is 
$$
\text{braid}(K') = \sum_{i=1}^{2m}|a_i| - t(\mathbf{a}) + 1.
$$
By Theorem \ref{thm:ors}, the sequence of a reduced even continued fraction for $K = K([\tilde{{\mathbf a}}])$ has the form 
\[
\tilde{\mathbf{a}} = (\ve_1 \mathbf{a}, 2c_1, \ve_2 \mathbf{a}^{-1}, 2c_2, \ve_3 \mathbf{a}, 2c_3, 
\ve_4 \mathbf{a}^{-1}, 2c_4, \cdots, \ve_{2r} \mathbf{a}^{-1}, 2c_{2r}, \ve_{2r+1} \mathbf{a}) .
\]
Here $\ve_1=1$ and $\ve_j = \pm 1$, and $c_j \in \BZ$. Note that if $c_j=0$ then we can assume $\ve_{j+1} = \ve_j$ (see \cite{SZK} for details). 
To get a reduced even continued fraction, we delete $c_j=0$ and combine $\ve_j \mathbf{a}^{(-1)^{j-1}}$ with $\ve_{j+1} \mathbf{a}^{(-1)^j} = \ve_{j} \mathbf{a}^{(-1)^j}$. 
Hence the braid index of $K$ is 
$$
\text{braid}(K) = (2r+1)\sum_{i=1}^{2m}|a_i| + \sum_{\substack{j=1 \\ c_j \not= 0}}^{2r} |c_j| - t(\tilde{\mathbf{a}}) + 1.
$$
Note that $t(\tilde{\mathbf{a}}) \le (2r+1) t(\mathbf{a}) + 2 \#\{c_j \not= 0\}$. 
We have
\begin{eqnarray*}
&&\text{braid}(K) \\
&=& (2r+1)\left( \sum_{i=1}^{2m}|a_i| -  t(\mathbf{a}) \right) + \sum_{\substack{j=1 \\ c_j \not= 0}}^{2r} |c_j|  - 2 \#\{c_j \not= 0\}+ 1 \\
&& + \,  \left( (2r+1) t(\mathbf{a}) + 2 \#\{c_j \not= 0\} - t(\tilde{\mathbf{a}})\right) \\
&=& (2r+1) (\text{braid}(K') - 1)  + \sum_{\substack{j=1 \\ c_j \not= 0}}^{2r} (|c_j|  - 1) - \#\{c_j \not= 0\} + 1 \\
&& + \,  \left( (2r+1) t(\mathbf{a}) + 2 \#\{c_j \not= 0\} - t(\tilde{\mathbf{a}})\right) \\
&=& (2r+1) (\text{braid}(K') - 2)  + \sum_{\substack{j=1 \\ c_j \not= 0}}^{2r} (|c_j|  - 1) + (2r - \#\{c_j \not= 0\}) + 2\\
&& + \,  \left( (2r+1) t(\mathbf{a}) + 2 \#\{c_j \not= 0\} - t(\tilde{\mathbf{a}})\right) \\
&=& 3 (\text{braid}(K') - 2) + 2 + (2r-2) (\text{braid}(K') - 2)  + \sum_{\substack{j=1 \\ c_j \not= 0}}^{2r} (|c_j|  - 1) + (2r - \#\{c_j \not= 0\}) \\
&& + \,  \left( (2r+1) t(\mathbf{a}) + 2 \#\{c_j \not= 0\} - t(\tilde{\mathbf{a}})\right).
\end{eqnarray*}
This implies the desired inequality 
\begin{equation}\label{mainineq}
\text{braid}(K) \ge  3 (\text{braid}(K') - 2) + 2 = 3 \, \text{braid}(K') - 4,
\end{equation} 
since the other terms are non-negative: 
\begin{eqnarray}
(2r-2) (\text{braid}(K') - 2) &\ge& 0, \label{ineq1}\\
\sum_{\substack{j=1 \\ c_j \not= 0}}^{2r} (|c_j|  - 1) &\ge& 0, \label{ineq2}\\ 
2r - \#\{c_j \not= 0\} &\ge& 0, \label{ineq3}\\
(2r+1) t(\mathbf{a}) + 2 \#\{c_j \not= 0\} - t(\tilde{\mathbf{a}})&\ge& 0. \label{ineq4}
\end{eqnarray}
\end{proof}

\section{Minimality of two-bridge knots with small braid index}\label{sect_smallbraid}

We call a knot $K$ minimal if $K \geq K'$ implies $K' = K$ or $K'$ is the trivial knot. 
In this section, we discuss when a two-bridge knot $K$ with $\text{braid}(K) \leq 4$ is minimal. 
Suppose that $K \geq K'$ and that $K'$ is not the trivial knot. Then $K'$ is also a two-bridge knot. 
Since $3 \, \text{braid}(K') - 4 \le \text{braid}(K) \le 4$, we obtain $\text{braid}(K')=2$. 

\begin{lemma} \label{braid2}
Let $K$ be a two-bridge knot. If ${\rm braid}(K)=2$, then 
$K=K([\underbrace{2, -2,  \cdots, 2, -2}_{2m}])$ which is the torus knot $T(2m+1, 2)$. 
\end{lemma}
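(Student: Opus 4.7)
My plan is to combine the braid index formula recalled in Section \ref{prelim} with a rigidity argument forced by the equality case of two elementary bounds.

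Write $K = K([\mathbf{a}])$ where $\mathbf{a} = (2a_1, 2a_2, \ldots, 2a_{2m})$ is a reduced even continued fraction, so that each $a_i \neq 0$. The hypothesis together with the formula
$$2 = \mathrm{braid}(K) = \sum_{i=1}^{2m}|a_i| - t(\mathbf{a}) + 1$$
becomes $\sum_{i=1}^{2m}|a_i| = t(\mathbf{a}) + 1$. The two universal bounds $\sum_{i=1}^{2m}|a_i| \geq 2m$ (since each $a_i \neq 0$) and $t(\mathbf{a}) \leq 2m - 1$ (recorded immediately after the braid formula in Section \ref{prelim}) then pinch both sides:
$$2m \;\leq\; \sum_{i=1}^{2m}|a_i| \;=\; t(\mathbf{a}) + 1 \;\leq\; 2m.$$
Hence both inequalities are equalities.

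Now I would extract the rigidity consequences. Equality $\sum|a_i| = 2m$ with each $|a_i| \geq 1$ forces $|a_i| = 1$ for every $i$. Equality $t(\mathbf{a}) = 2m-1$ says that each of the $2m-1$ consecutive pairs in $\mathbf{a}$ is a sign change, so the signs of $2a_1,2a_2,\ldots,2a_{2m}$ strictly alternate. Combining these, $\mathbf{a}$ equals either $(2,-2,2,-2,\ldots,2,-2)$ or its negative $(-2,2,-2,2,\ldots,-2,2)$. The latter yields the mirror image of the former by the relation $K([-2b_1,\ldots,-2b_{2m}]) = K([2b_{2m},\ldots,2b_1])$ recorded in Section \ref{prelim}, so up to mirror image $K = K([2,-2,2,-2,\ldots,2,-2])$.

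It remains to identify $K([2,-2,\ldots,2,-2])$ with the torus knot $T(2m+1,2)$. I would do this via a short induction on $m$ showing that the continued fraction $[2,-2,2,-2,\ldots,2,-2]$ with $2m$ terms equals $2m/(2m+1)$, and then invoke the classical fact that the two-bridge knot $K(2m/(2m+1))$ is the $(2m+1,2)$-torus knot. Alternatively, one can draw the four-plat diagram associated with the alternating sequence $(2,-2,\ldots,2,-2)$ and recognize it directly as the standard diagram of $T(2m+1,2)$. The first part of the argument is essentially forced by the equality cases of very elementary inequalities; the main obstacle is the final torus-knot identification, which is a standard but slightly finicky bookkeeping step.
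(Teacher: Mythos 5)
Your proposal is correct and follows essentially the same route as the paper: both arguments read the formula $\mathrm{braid}(K)=\sum|a_i|-t(\mathbf{a})+1$ as a sum of nonnegative deficits and extract the equality case $|a_i|=1$, $t(\mathbf{a})=2m-1$, forcing the strictly alternating sequence. Your extra care with the mirror-image ambiguity and the final identification $[2,-2,\ldots,2,-2]=2m/(2m+1)$ fills in details the paper simply asserts.
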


\begin{proof}
We first note that for $K = K([\mathbf{a}])$ we have
$$
\text{braid}(K) = \sum_{i=1}^{2m}|a_i| - t(\mathbf{a}) + 1 =  \sum_{i=1}^{2m} (|a_i| -1) + (2m-1 - t(\mathbf{a})) + 2 \ge 2,
$$
where $\mathbf{a} = (2a_1, 2a_2, \cdots, 2a_{2m})$ and each $a_i \not= 0$.
Equality holds if and only if $|a_i|=1$ and $t(\mathbf{a}) = 2m-1$. 
This means that $|a_i|=1$ and $a_i a_{i+1}<0$ for any $i$, i.e. $\mathbf{a} = (\underbrace{2, -2,  \cdots, 2, -2}_{2m})$. Hence $K$ is the torus knot $T(2m+1, 2)$.
\end{proof}

Note that all non-trivial knots with braid index $2$ are two-bridge knots, 
since it is well known that the bridge number is at most the braid index. 

By Lemma \ref{braid2}, in order to see if a two-bridge knot $K$ with $\text{braid}(K) \leq 4$ is minimal or not, 
we only need to check whether $K \geq T(2m+1,2)$. 
Namely, we consider $K= K([\tilde{\mathbf{a}}])$ in the case  
$$
\tilde{\mathbf{a}} = (\ve_1 \mathbf{a}, 2c_1, \ve_2 \mathbf{a}^{-1}, 2c_2, \ve_3 \mathbf{a}, 2c_3, 
\ve_4 \mathbf{a}^{-1}, 2c_4, \cdots, \ve_{2r} \mathbf{a}^{-1}, 2c_{2r}, \ve_{2r+1} \mathbf{a}).
$$
where $\mathbf{a} = (\underbrace{2, -2,  \cdots, 2, -2}_{2m})$, $\ve_1=1$ and $\ve_j = \pm 1$, and $c_j \in \BZ$. 
Note that if $c_j=0$ then we can assume $\ve_{j+1} = \ve_j$.

\subsection{The braid index of $K$ is $2$} 

Let $K$ be a two-bridge knot with braid index $2$, which is a torus knot by Lemma \ref{braid2}.  
Assume that $K \geq K'$ and that $K'$ is not the trivial knot. 
Then the braid index of $K'$ is also $2$, in particular, $K' = K([\mathbf{a}])$ where $\mathbf{a} = (\underbrace{2, -2,  \cdots, 2, -2}_{2m})$.
Since the equality holds in (\ref{mainineq}) and the left hand side of (\ref{ineq1}) is zero, 
the left hand sides of (\ref{ineq2}), (\ref{ineq3}), (\ref{ineq4}) are all zero. 
Then we have $|c_j|=1$ and $(-1)^{j}c_j < 0$ for all $j$, that is, 
$\tilde{\mathbf{a}} =  (\mathbf{a}, 2, \mathbf{a}^{-1}, -2, \mathbf{a}, 2, \cdots, \mathbf{a}^{-1}, -2, \mathbf{a})$. 
Since the length of $\tilde{\mathbf{a}}$ is equal to $(2r+1)(2m) + 2r$, 
we see that $K=K([\tilde{\mathbf{a}}])$ is the torus knot $T((2r+1)(2m)+2r +1, 2)= T((2r+1)(2m+1), 2)$. 
Therefore we obtain the following proposition. 

\begin{proposition} \label{braid2minimal} 
A two-bridge knot $K$ with $\emph{braid}(K)=2$ is minimal if and only if 
$K=K([\underbrace{2, -2, \cdots, 2, -2}_{2k}])$
(the torus knot $T(2k+1, 2)$) where $2k+1$ is a prime number.
\end{proposition}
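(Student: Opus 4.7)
My plan is to package the direct analysis that the authors carry out in the paragraphs just preceding the proposition into an if-and-only-if statement, then fill in the converse with a short explicit construction.

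First I would record the setup. By Lemma \ref{braid2}, any two-bridge knot $K$ with ${\rm braid}(K)=2$ has the form $K=T(2k+1,2)=K([\mathbf{a}_k])$ with $\mathbf{a}_k=(\underbrace{2,-2,\ldots,2,-2}_{2k})$ for some $k\ge 1$, so minimality of $K$ reduces to asking for which values of $k$ there exists no epimorphism $G(K)\to G(K')$ with $K'$ a nontrivial knot strictly smaller than $K$. By Theorem \ref{ineq} and the remark at the start of Section \ref{sect_smallbraid}, such a $K'$ must itself be a two-bridge knot of braid index $2$, hence $K'=T(2m+1,2)$ for some $m\ge 1$.

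Next I would invoke the computation carried out immediately before the proposition. There the authors show that if $K\ge K'$ under the above constraints then the inequalities \eqref{ineq1}--\eqref{ineq4} must all be equalities, which forces the ORS parameters to be $|c_j|=1$ with $(-1)^j c_j<0$ and $\varepsilon_j=\pm 1$ consistent with producing the continued fraction $\tilde{\mathbf{a}}=(\mathbf{a},2,\mathbf{a}^{-1},-2,\mathbf{a},2,\ldots,\mathbf{a}^{-1},-2,\mathbf{a})$ of length $(2r+1)(2m)+2r$, so that $K=T((2r+1)(2m+1),2)$ with $r\ge 1$. Translating this: $K=T(2k+1,2)$ admits an epimorphism onto a nontrivial $K'$ of braid index $2$ only if $2k+1$ factors as $(2r+1)(2m+1)$ with both factors $\ge 3$, i.e. only if $2k+1$ is composite. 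This gives the ``if $2k+1$ is prime then $K$ is minimal'' direction immediately.

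For the converse I would give the explicit ORS construction. Suppose $2k+1=pq$ with odd $p,q\ge 3$; write $p=2r+1$ and $q=2m+1$. Setting $\mathbf{a}=\mathbf{a}_m=(\underbrace{2,-2,\ldots,2,-2}_{2m})$, $\varepsilon_j=1$ for all $j$ (permitted since every intermediate $c_j$ is nonzero), and $c_j=(-1)^{j+1}$, Theorem \ref{thm:ors} produces an epimorphism $G(K([\tilde{\mathbf{a}}]))\to G(K([\mathbf{a}]))=G(T(q,2))$, and a direct count of the length of $\tilde{\mathbf{a}}=(\mathbf{a},2,\mathbf{a}^{-1},-2,\ldots,\mathbf{a})$ shows $K([\tilde{\mathbf{a}}])=T(pq,2)=K$, witnessing that $K$ is not minimal.

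The only real subtlety is bookkeeping: one must check that the continued fraction exhibited in the converse really is a reduced even continued fraction (each coefficient nonzero) and that it actually represents $T(pq,2)$, i.e. that the sequence $(\underbrace{2,-2,\ldots,2,-2}_{2m},2,\underbrace{-2,2,\ldots,-2,2}_{2m},-2,\ldots)$ is $(\underbrace{2,-2,\ldots,2,-2}_{pq-1})$. This is just a sign/pattern verification, and is the main thing I would double-check; once it is in hand the proposition follows by combining both directions with the observation that, since $2k+1$ is odd, having a nontrivial factorization into factors $\ge 3$ is exactly the same as being composite.
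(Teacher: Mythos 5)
Your proposal is correct and follows essentially the same route as the paper: reduce via Lemma \ref{braid2} and the inequality of Theorem \ref{ineq} to the case $K'=T(2m+1,2)$, then use the forced equalities in (\ref{ineq1})--(\ref{ineq4}) to conclude $2k+1=(2r+1)(2m+1)$. The only difference is that you spell out the converse construction (the explicit choice $\varepsilon_j=1$, $c_j=(-1)^{j+1}$ and the verification that $\tilde{\mathbf{a}}$ is the alternating sequence of length $pq-1$), which the paper leaves implicit in its appeal to the if-and-only-if form of Theorem \ref{thm:ors}; this is a welcome clarification, not a new approach.
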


\subsection{The braid index of $K$ is $3$} 
\begin{lemma} \label{braid3}
Let $K$ be a two-bridge knot. If $\emph{braid}(K)=3$, then one of the following holds
\begin{itemize}
\item[$-$] (type $3a$) $K=K([s_1 n_1, s_2 n_2, \cdots, s_{2m} n_{2m}])$
where $s_i = (-1)^{i-1}$, and there exists $i_0$ such that $n_{i_0} = 4$ and $n_i=2$ for $i \not= i_0$. 
\item[$-$]  (type $3b$)  $K=K([2s_1, 2s_2, \cdots, 2s_{2m}])$
where there exists $i_0$ such that $s_i = (-1)^{i-1}$ for $i \le i_0$ and $s_i = (-1)^{i}$ for $i \ge i_0+1$.
\end{itemize}
\end{lemma}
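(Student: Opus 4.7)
The plan is to apply the explicit formula for the braid index recalled in the proof of Lemma \ref{braid2}, namely
\[
\text{braid}(K) = \sum_{i=1}^{2m}(|a_i|-1) + (2m - 1 - t(\mathbf{a})) + 2,
\]
where $\mathbf{a}=(2a_1,\ldots,2a_{2m})$ is the reduced even continued fraction for $K$ and each of the two summands on the right is a non-negative integer. The hypothesis $\text{braid}(K)=3$ forces
\[
\sum_{i=1}^{2m}(|a_i|-1) + (2m-1-t(\mathbf{a})) = 1,
\]
so exactly one of the two summands equals $1$ and the other equals $0$. This splits the proof cleanly into two cases that I expect to correspond, respectively, to types $3a$ and $3b$.

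In the first case, $\sum_{i=1}^{2m}(|a_i|-1) = 1$ and $t(\mathbf{a}) = 2m-1$. The first equation produces a unique index $i_0$ with $|a_{i_0}|=2$ and $|a_i|=1$ for $i\neq i_0$. The second equation is the maximum possible value of $t(\mathbf{a})$, so the signs $s_i := \text{sign}(a_i)$ strictly alternate. Setting $n_i := 2|a_i| \in \{2,4\}$ so that $2a_i = s_i n_i$, and normalizing $s_1 = 1$ by invoking the equivalences $K([2a_1,\ldots,2a_{2m}]) = K([-2a_{2m},\ldots,-2a_1])$ and the mirror relation recalled in Section \ref{prelim}, we arrive at $s_i = (-1)^{i-1}$ with $n_{i_0}=4$ and $n_i = 2$ otherwise. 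This is exactly type $3a$.

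In the second case, $|a_i| = 1$ for every $i$ and $t(\mathbf{a}) = 2m-2$, so among the $2m-1$ adjacent pairs exactly one, say $(s_{i_0}, s_{i_0+1})$, has matching signs while the rest alternate. A short induction starting from $s_1$ and propagating outward through $i_0$ gives $s_i = s_1(-1)^{i-1}$ for $i \le i_0$ and $s_i = s_1(-1)^i$ for $i \ge i_0+1$; after normalizing $s_1 = 1$ this is type $3b$. The only bookkeeping subtlety, and the main potential pitfall, is the normalization $s_1=1$: one must verify that the allowed equivalences on reduced even continued fractions always permit this choice and that the two families listed exhaust the possibilities without overlap. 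Beyond this point, the argument is a direct enumeration.
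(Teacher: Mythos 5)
Your proposal is correct and follows essentially the same route as the paper: the same identity $\mathrm{braid}(K)=\sum_i(|a_i|-1)+(2m-1-t(\mathbf{a}))+2$ and the same two-case split according to which non-negative summand equals $1$. The only difference is that you explicitly flag the normalization $s_1=1$, which the paper handles implicitly by working up to mirror image; this is a reasonable point of care but does not change the argument.
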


\begin{proof}
Recall that for $K = K([\mathbf{a}])$ we have
$$
\text{braid}(K) = \sum_{i=1}^{2m}|a_i| - t(\mathbf{a}) + 1 =  \sum_{i=1}^{2m} (|a_i| -1) + (2m-1 - t(\mathbf{a})) + 2
$$
where $\mathbf{a} = (2a_1, 2a_2, \cdots, 2a_{2m})$ and each $a_i \not= 0$.
Since $\mathrm{braid}(K)=3$, we get
$$
\sum_{i=1}^{2m} (|a_i| -1) + (2m-1 - t(\mathbf{a})) =1.
$$

Since $|a_i| \ge 1$ and $t(\mathbf{a}) \le 2m-1$, there are 2 cases to consider.

\underline{Case 1}: $\sum_{i=1}^{2m} (|a_i| -1) =1$ and $2m-1 - t(\mathbf{a})=0$. Then there exists $i_0$ such that $|a_{i_0}| = 2$, and $|a_i|=1$ for $i \not= i_0$. Moreover $a_i a_{i+1}<0$ for all $i$ (since $t(\mathbf{a}) =  2m-1$). Hence $a_{i_0} = 2(-1)^{i_0-1}$, and $a_{i} = (-1)^{i-1}$ for $i \not= i_0$. 

\underline{Case 2}: $\sum_{i=1}^{2m} (|a_i| -1) =0$ and $2m-1 - t(\mathbf{a})=1$. Then $|a_{i}| =1$ for all $i$. Moreover, there exists $i_0$ such that $a_{i_0} a_{i_0+1}>0$, and $a_i a_{i+1}<0$ for all $i \not= i_0$ (since $t(\mathbf{a}) =  2m-2$). Hence $a_i = (-1)^{i-1}$ for $i \le i_0$ and $a_i = (-1)^{i}$ for $i \ge i_0+1$.
\end{proof}

Let $K$ be a two-bridge knot with braid index $3$. 
Assume that $K \geq K'$ and that $K'$ is not the trivial knot.
Since the left hand side of (\ref{ineq1}) is zero, we have 
\begin{eqnarray*}
&& \sum_{\substack{j=1 \\ c_j \not= 0}}^{2r} (|c_j|  - 1) + (2r - \#\{c_j \not= 0\}) +  \left( (2r+1) t(\mathbf{a}) + 2 \#\{c_j \not= 0\} - t(\tilde{\mathbf{a}})\right) \\
&=& \text{braid}(K) -2 =1, 
\end{eqnarray*}
namely,  
one of the left hand sides of (\ref{ineq2}), (\ref{ineq3}), (\ref{ineq4}) is $1$ and that the others are zero. 
We consider the following three cases: $(A,B,C) = (1,0,0), (0,1,0), (0,0,1)$,  
where $A,B,C$ are the left hand sides of (\ref{ineq2}), (\ref{ineq3}), (\ref{ineq4}) respectively.  

\underline{Case 1}: $\sum_{\substack{j=1 \\ c_j \not= 0}}^{2r} (|c_j|  - 1)=1$, $2r - \#\{c_j \not= 0\}=0$ and $(2r+1) t(\mathbf{a}) + 2 \#\{c_j \not= 0\} - t(\tilde{\mathbf{a}}) =0$. Then there exists $j_0$ such that $|c_{j_0}|=2$ and $|c_j| = 1$ for $j \not= j_0$. 
Moreover $\ve_{j+1} = \ve_j$ and $(-1)^j \ve_{j} c_{j} <0$ for all $j$. Hence $\ve_1 = \cdots = \ve_{2r+1} = 1$, and $(-1)^{j-1}c_{j} >0$ for $j$. In this case $K$ is of type $3a$ in Lemma \ref{braid3} with 
\begin{itemize}
\item $i_0 = j_0 (2m+1)$, and 
\item length $2k = (2r+1)2m+2r$, so $2k + 1= (2r+1)(2m+1)$.
\end{itemize}

\underline{Case 2}: $\sum_{\substack{j=1 \\ c_j \not= 0}}^{2r} (|c_j|  - 1)=0$, $2r - \#\{c_j \not= 0\}=1$ and $(2r+1) t(\mathbf{a}) + 2 \#\{c_j \not= 0\} - t(\tilde{\mathbf{a}}) =0$. 
Then there exists $j_0$ such that $c_{j_0}=0$ (note that $\ve_{j_0+1} = \ve_{j_0}$) and $|c_j| = 1$ for $j \not= j_0$. 
Moreover $\ve_{j+1} = \ve_j$ and $(-1)^j \ve_{j} c_{j} <0$ for all $j \not= j_0$. Hence $\ve_1 = \cdots = \ve_{2r+1} = 1$, and $(-1)^{j-1} c_{j} >0 $ for $j \not= j_0$. In this case, by deleting $c_{j_0}=0$ and combining $\ve_{j_0} \mathbf{a}^{(-1)^{j_0-1}}$ with $\ve_{j_0+1} \mathbf{a}^{(-1)^{j_0}} = \ve_{j_0} \mathbf{a}^{(-1)^{j_0}}$, we see that $K$ is of type $3a$ in Lemma \ref{braid3} with 
\begin{itemize}
\item $i_0 = j_0 (2m+1)-1$, and  
\item length $2k = (2r+1)2m+2r-2$, so $2k + 3= (2r+1)(2m+1)$.
\end{itemize}

\underline{Case 3}: $\sum_{\substack{j=1 \\ c_j \not= 0}}^{2r} (|c_j|  - 1)=0$, $2r - \#\{c_j \not= 0\}=0$, and $(2r+1) t(\mathbf{a}) + 2 \#\{c_j \not= 0\} - t(\tilde{\mathbf{a}}) =1$. Then $|c_j|=1$ for all $j$. Moreover, there exists $j_0$ such that $\ve_{j_0+1} = - \ve_{j_0}$, and $\ve_{j+1} =\ve_{j}$ and $(-1)^j \ve_{j} c_{j} <0$ for $j \not= j_0$. This implies that $\ve_j = 1$ for $j \le j_0$, and $\ve_j = -1$ for $j \ge j_0 +1$. Hence 
$(-1)^{j-1} c_j > 0$ for $j < j_0$, and $(-1)^{j} c_j >0$ for $j > j_0$. In this case $K$ is of type $3b$ in Lemma \ref{braid3} with 
\begin{itemize}
\item  $i_0 = j_0 (2m+1)$ or $i_0 = j_0 (2m+1) -1$ (depending on whether $(-1)^{j_0 - 1} c_{j_0} >0$ or $(-1)^{j_0} c_{j_0} >0$), and
\item length $2k = (2r+1)2m+2r$, so $2k + 1= (2r+1)(2m+1)$.
\end{itemize}

Therefore we obtain the following proposition. 

\begin{proposition} \label{braid3minimal}
Let $K$ be a two-bridge knot. If $\emph{braid}(K)=3$, then $K$ is not minimal if and only if  one of the following holds
\begin{itemize}
\item[$-$] (type $3A1$) $K=K([s_1 n_1, s_2 n_2, \cdots, s_{2k} n_{2k}])$ where $s_i = (-1)^{i-1}$ and there exists $(r,m,i_0,j_0) \in {\mathbb N}^4$ such that $2k + 1= (2r+1)(2m+1)$, $i_0 = j_0 (2m+1) ~(1 \leq j_0 \leq 2 r)$, $n_{i_0} = 4$, and $n_i=2$ for $i \not= i_0$.

\item[$-$] (type $3A2$) $K=K([s_1 n_1, s_2 n_2, \cdots, s_{2k} n_{2k}])$ where $s_i = (-1)^{i-1}$
and there exists $(r,m,i_0,j_0) \in {\mathbb N}^4$ such that 
$2k+3 = (2r+1)(2m+1)$,  $i_0 = j_0 (2m+1) - 1 ~(1 \leq j_0 \leq 2 r)$, $n_{i_0} = 4$, and $n_i=2$ for $i \not= i_0$.

\item[$-$] (type $3B$) $K=K([2s_1, 2s_2, \cdots, 2s_{2k}])$ where 
there exists $(r,m,i_0,j_0) \in {\mathbb N}^4$ such that 
$2k+1 = (2r+1)(2m+1)$, $i_0 = j_0 (2m+1)$ or $i_0 = j_0 (2m+1) -1 ~(1 \leq j_0 \leq 2 r)$, $s_i = (-1)^{i-1}$ for $i \le i_0$, and $s_i = (-1)^{i}$ for $i \ge i_0+1$.
\end{itemize}
\end{proposition}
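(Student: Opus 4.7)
The plan is to package the three-case analysis already carried out in Cases 1, 2, 3 above into the biconditional of the proposition, and then verify the converse by explicitly constructing the ORS epimorphisms. The hypotheses of the proposition (that $K$ is a two-bridge knot with $\text{braid}(K)=3$) plus Lemma \ref{braid3} give the classification of such $K$ into types $3a$ and $3b$, and the case analysis refines this under the extra hypothesis of non-minimality.

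For the forward direction (if $K$ is not minimal then $K$ is of one of the three types), I would proceed as follows. Suppose $K \geq K'$ with $K'$ non-trivial. Theorem \ref{ineq} combined with $\text{braid}(K)=3$ forces $\text{braid}(K')=2$, so by Lemma \ref{braid2}, $K' = T(2m+1,2) = K([\mathbf{a}])$ with $\mathbf{a} = (2,-2,\ldots,2,-2)$ of length $2m$. Theorem \ref{thm:ors} then writes $K = K([\tilde{\mathbf{a}}])$ in the standard ORS form. Since $\text{braid}(K')=2$, the left-hand side of (\ref{ineq1}) is automatically zero regardless of $r$, so the three non-negative quantities in (\ref{ineq2}), (\ref{ineq3}), (\ref{ineq4}) must sum to $\text{braid}(K)-2=1$, forcing exactly one of them to equal $1$. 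The three resulting subcases are precisely Cases $1$, $2$, $3$ already worked out above, which produce respectively type $3A1$ with $2k+1=(2r+1)(2m+1)$ and $i_0=j_0(2m+1)$; type $3A2$ with $2k+3=(2r+1)(2m+1)$ and $i_0=j_0(2m+1)-1$; and type $3B$ with $2k+1=(2r+1)(2m+1)$ and $i_0\in\{j_0(2m+1),\,j_0(2m+1)-1\}$.

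For the converse direction, I would reverse the construction. Given parameters $(r,m,i_0,j_0)$ as in type $3A1$, set $\mathbf{a} = (2,-2,\ldots,2,-2)$ of length $2m$, take $\ve_j \equiv 1$, $c_{j_0} = 2(-1)^{j_0-1}$, and $c_j = (-1)^{j-1}$ for $j \neq j_0$; then substituting into the ORS continued fraction recovers $[\tilde{\mathbf{a}}]$, hence $K$, and Theorem \ref{thm:ors} provides an epimorphism $G(K) \to G(T(2m+1,2))$, showing $K$ is not minimal. For type $3A2$ the same choice is made except $c_{j_0}=0$ (with $\ve_{j_0+1}=\ve_{j_0}$ to allow the combination step), and for type $3B$ all $|c_j|=1$ with $\ve_j = 1$ for $j \leq j_0$ and $\ve_j=-1$ for $j \geq j_0+1$, with the sign of $c_{j_0}$ chosen according to which of the two possibilities $i_0=j_0(2m+1)$ or $i_0=j_0(2m+1)-1$ holds.

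The main obstacle is the bookkeeping between index conventions: verifying that each position $i_0$ in the reduced even continued fraction for $K$ lines up correctly with the position $j_0$ of the perturbation (the $|c_{j_0}|\neq 1$, the $c_{j_0}=0$, or the sign-flip of $\ve_j$) inside the ORS expansion, and that the length identities such as $2k+1=(2r+1)(2m+1)$ or $2k+3=(2r+1)(2m+1)$ follow from counting $(2r+1)$ blocks of length $2m$ plus $2r$ connector entries $c_j$ (with the Case $2$ adjustment removing two entries when $c_{j_0}=0$ and the two adjacent $\mathbf{a}^{\pm 1}$ blocks merge). The constraint $1 \leq j_0 \leq 2r$ arises because $c_{j_0}$ sits strictly between two $\mathbf{a}$-blocks; care must be taken that after the merger in Case $2$ the resulting continued fraction is still reduced and even, which is automatic because the boundary entries of $\mathbf{a}$ and $\mathbf{a}^{-1}$ have opposite signs under $\ve_{j_0}=\ve_{j_0+1}$.
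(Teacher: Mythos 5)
Your proposal follows the paper's argument essentially step for step: the forward direction is the reduction to $\mathrm{braid}(K')=2$ via Theorem \ref{ineq} and Lemma \ref{braid2}, followed by the three-way split $(A,B,C)\in\{(1,0,0),(0,1,0),(0,0,1)\}$ on the slack terms (\ref{ineq2})--(\ref{ineq4}), and the converse is read off from the ``if and only if'' in Theorem \ref{thm:ors} by reversing the construction (the paper leaves this direction implicit; your explicit choices of $\ve_j$ and $c_j$ are the correct ones). One factual slip in your final paragraph, however: when $c_{j_0}=0$ and $\ve_{j_0+1}=\ve_{j_0}$, the last entry of $\ve_{j_0}\mathbf{a}^{(-1)^{j_0-1}}$ and the first entry of $\ve_{j_0+1}\mathbf{a}^{(-1)^{j_0}}$ have the \emph{same} sign (both equal $-2\ve_{j_0}$ when $j_0$ is odd, both equal $2\ve_{j_0}$ when $j_0$ is even), not opposite signs. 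This is precisely why the merger $[\ldots,a,0,b,\ldots]=[\ldots,a+b,\ldots]$ produces an entry $\pm 4$ --- which is where the $n_{i_0}=4$ of type $3A2$ comes from --- and why the resulting continued fraction remains reduced and even. Had the signs been opposite, as you assert, the two entries would cancel to $0$ and the fraction would fail to be reduced, so the reasoning as written would break; with the sign corrected, your argument goes through and matches the paper's.
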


\subsection{The braid index of $K$ is $4$} 

\begin{lemma} \label{braid4}
Let $K$ be a two-bridge knot. If $\emph{braid}(K)=4$, then one of the following holds
\begin{itemize}
\item[$-$] (type $4a$)  $K=K([s_1 n_1, s_2 n_2, \cdots, s_{2m} n_{2m}])$
where $s_i = (-1)^{i-1}$, and  there exists $i_0$ such that $n_{i_0} = 6$ and $n_i=2$ for $i \not= i_0$.
\item[$-$]  (type $4b$)  $K=K([s_1 n_1, s_2 n_2, \cdots, s_{2m} n_{2m}])$
where $s_i = (-1)^{i-1}$, and there exist $i_0 \not= i_1$ such that $n_{i_0} = n_{i_1} = 4$ and $n_i=2$ for $i \not= i_0, i_1$.
\item[$-$] (type $4c$)  $K=K([s_1 n_1, s_2 n_2, \cdots, s_{2m} n_{2m}])$
where there exist $i_0, i_1$ (it might happen that $i_0 = i_1$) such that $s_i = (-1)^{i-1}$ for $i \le i_1$ and $s_i = (-1)^{i}$ for $i \ge i_1+1$, $n_{i_0}=4$ and $n_i=2$ for $i \not= i_0$.
\item[$-$] (type $4d$)  $K=K([2s_1, 2s_2, \cdots, 2s_{2m}])$
where there exist $i_0 < i_1$ such that  $s_i = (-1)^{i-1}$ for $i \le i_0$ or $i \ge i_1 +1$, and $s_i = (-1)^{i}$ for $i_0+1 \le i \le i_1$.

\end{itemize}
\end{lemma}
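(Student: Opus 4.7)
The plan is to mirror the case analysis used for Lemma \ref{braid3}, applied to the identity
$$\text{braid}(K) = \sum_{i=1}^{2m}(|a_i|-1) + (2m-1-t(\mathbf{a})) + 2$$
recorded in Section \ref{prelim}. Setting the right-hand side equal to $4$ yields
$$\sum_{i=1}^{2m}(|a_i|-1) + (2m-1-t(\mathbf{a})) = 2,$$
and since both summands are nonnegative integers, there are exactly three cases according to how the value $2$ is distributed between them: $(2,0)$, $(1,1)$, and $(0,2)$. In each case I would extract from the constraints the admissible sign pattern $s_i$ and the placement of the large $|a_i|$, then match it against types $4a$--$4d$.

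In the case $(2,0)$, the equality $t(\mathbf{a})=2m-1$ forces strict alternation, so (up to mirror image, which corresponds to globally negating the $a_i$) $s_i=(-1)^{i-1}$. The sum $\sum(|a_i|-1)=2$ then splits either as one index $i_0$ with $|a_{i_0}|=3$, giving $n_{i_0}=6$ and $n_i=2$ otherwise (type $4a$), or as two distinct indices $i_0\neq i_1$ with $|a_{i_0}|=|a_{i_1}|=2$, giving $n_{i_0}=n_{i_1}=4$ (type $4b$). In the case $(1,1)$, exactly one index $i_0$ satisfies $|a_{i_0}|=2$, the remaining $|a_i|$ equal $1$, and there is exactly one position $i_1$ where $a_{i_1}a_{i_1+1}>0$; starting from $s_1=1$ and tracking the single parity flip across $i_1$ gives $s_i=(-1)^{i-1}$ for $i\le i_1$ and $s_i=(-1)^i$ for $i\ge i_1+1$, which is type $4c$ (with $i_0$ and $i_1$ allowed to coincide). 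In the case $(0,2)$, all $|a_i|=1$ and there are exactly two positions $i_0<i_1$ with same-sign consecutive pairs; tracking the two parity flips produces $s_i=(-1)^{i-1}$ for $i\le i_0$, $s_i=(-1)^i$ for $i_0+1\le i\le i_1$, and $s_i=(-1)^{i-1}$ for $i\ge i_1+1$, which is precisely type $4d$.

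The step that demands the most care is the $(0,2)$ case, where one must correctly resolve the two-flip sign pattern and confirm that the piecewise parity formula in the lemma statement is the one produced (rather than a variant obtained by a different choice of $s_1$). Beyond that, I expect no genuine conceptual obstacle: the two quantities $\sum(|a_i|-1)$ and $2m-1-t(\mathbf{a})$ dictate the combinatorial shape of $\mathbf{a}$ rigidly, and the mirror-image normalization $s_1=1$ is handled exactly as in the proof of Lemma \ref{braid3}. The remainder is routine bookkeeping of the admissible configurations.
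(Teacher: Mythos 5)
Your proposal is correct and follows essentially the same route as the paper: it sets $\sum_{i}(|a_i|-1)+(2m-1-t(\mathbf{a}))=2$, splits into the three distributions $(2,0)$, $(1,1)$, $(0,2)$ of that total (with $(2,0)$ further splitting into one entry of absolute value $3$ versus two entries of absolute value $2$), and reads off types $4a$--$4d$ exactly as the paper does. The sign-pattern bookkeeping you describe, including the normalization $s_1=1$, matches the paper's argument.
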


\begin{proof}
Since $\mathrm{braid}(K)=4$ we have
$$
\sum_{i=1}^{2m} (|a_i| -1) + (2m-1 - t(\mathbf{a})) = \mathrm{braid}(K) - 2 = 2
$$
where $\mathbf{a} = (2a_1, 2a_2, \cdots, 2a_{2m})$ and each $a_i \not= 0$. We check the following three cases. 

\underline{Case 1}: $\sum_{i=1}^{2m} (|a_i| -1) =2$ and $2m-1 - t(\mathbf{a})=0$. Then $a_i a_{i+1}<0$ for all $i$. There are 2 subcases to consider. 
\begin{enumerate}
\item There exists $i_0$ such that $|a_{i_0}| = 3$, and $|a_i|=1$ for $i \not= i_0$. Hence $a_{i_0} = 3 (-1)^{i_0-1}$, and $a_i = (-1)^{i-1}$ for $i \not= i_0$.
\item There exist $i_0 \not= i_1$ such that $|a_{i_0}| = |a_{i_1}| =2$, and $|a_i|=1$ for $i \not= i_0, i_1$. Hence $a_{i_0} = 2 (-1)^{i_0-1}$, $a_{i_1} = 2 (-1)^{i_1-1}$, and $a_i = (-1)^{i-1}$ for $i \not= i_0, i_1$.
\end{enumerate}

\underline{Case 2}: $\sum_{i=1}^{2m} (|a_i| -1) =1$ and $2m-1 - t(\mathbf{a})=1$. Then there exists $i_0$ such that $|a_{i_0}|=2$ and  $|a_i|=1$ for $i \not= i_0$. There also exists $i_1$ (it might happen that $i_1 = i_0$) such that $a_{i_1} a_{i_1+1}>0$, and $a_i a_{i+1}<0$ for all $i \not= i_1$. Hence $a_i = (-1)^{i-1}$ for $i \not= i_0$ and $i \le i_1$, and $a_i = (-1)^{i}$ for $i \not= i_0$ and $i \ge i_1+1$. Moreover $a_{i_0} = 2(-1)^{i_0-1}$ if $i_0 \le i_1$, and $a_{i_0} = 2(-1)^{i_0}$ if $i_0 \ge i_1+1$. 

\underline{Case 3}: $\sum_{i=1}^{2m} (|a_i| -1) =0$ and $2m-1 - t(\mathbf{a})=2$. Then $|a_i| =1$ for all $i$. Moreover, there exist $i_0 < i_1$ such that $a_{i_0} a_{i_0+1}>0$, $a_{i_1} a_{i_1+1}>0$, and $a_{i} a_{i+1} < 0$ for all $i \not= i_0, i_1$. Hence $a_i =(-1)^{i-1}$ for $i \le i_0$ or $i \ge i_1 +1$, and $a_i = (-1)^i$ for $i_0 +1 \le i \le i_1$.
\end{proof}

Let $K$ be a two-bridge knot with braid index $4$. 
Assume that $K \geq K'$ and that $K'$ is not the trivial knot.
Since the left hand side of (\ref{ineq1}) is zero, we have  
\begin{eqnarray*}
&& \sum_{\substack{j=1 \\ c_j \not= 0}}^{2r} (|c_j|  - 1) + (2r - \#\{c_j \not= 0\}) +  \left( (2r+1) t(\mathbf{a}) + 2 \#\{c_j \not= 0\} - t(\tilde{\mathbf{a}})\right) \\
&=& \mathrm{braid}(K) - 2 = 2,
\end{eqnarray*}
namely, 
the sum of the left hand sides of (\ref{ineq2}), (\ref{ineq3}), and (\ref{ineq4}) is $2$. 
Then we consider the following six cases: $(A,B,C) = (2,0,0), (1,1,0), (1,0,1), (0,2,0), (0,1,1), (0,0,2)$,  
where $A,B,C$ are the left hand sides of (\ref{ineq2}), (\ref{ineq3}), (\ref{ineq4}) respectively.  

\underline{Case 1}: $\sum_{\substack{j=1 \\ c_j \not= 0}}^{2r} (|c_j|  - 1)=2$, $2r - \#\{c_j \not= 0\}=0$ and $(2r+1) t(\mathbf{a}) + 2 \#\{c_j \not= 0\} - t(\tilde{\mathbf{a}}) =0$. Then  $\ve_{j+1} = \ve_j$ and $(-1)^j \ve_{j} c_{j} <0$ for all $j$. This implies that $\ve_1 = \cdots = \ve_{2r+1} = 1$ and $(-1)^{j-1} c_j > 0$ for all $j$. There are 2 subcases to consider:
\begin{enumerate}
\item There exists $j_0$ such that $|c_{j_0}| = 3$, and $|c_j|=1$ for $j\not= j_0$. In this case $K$ is of type $4a$ in Lemma \ref{braid4} with 
\begin{itemize}
\item $i_0 = j_0 (2m+1)$, and 
\item length $2k = (2r+1)2m+2r$, so $2k + 1= (2r+1)(2m+1)$.
\end{itemize} 
\item There exist $j_0 \not= j_1$ such that $|c_{j_0}| = |c_{j_1}| =2$, and $|c_j|=1$ for $i \not= j_0, j_1$.  In this case $K$ is of type $4b$ in Lemma \ref{braid4} with 
\begin{itemize}
\item $i_0 = j_0 (2m+1)$, 
\item $i_1 = j_1 (2m+1)$, and 
\item length $2k = (2r+1)2m+2r$, so $2k + 1= (2r+1)(2m+1)$.
\end{itemize}
\end{enumerate}

\underline{Case 2}:  $\sum_{\substack{j=1 \\ c_j \not= 0}}^{2r} (|c_j|  - 1)=1$, $2r - \#\{c_j \not= 0\}=1$ and $(2r+1) t(\mathbf{a}) + 2 \#\{c_j \not= 0\} - t(\tilde{\mathbf{a}}) =0$. 
Then there exist $j_0 \not= j_1$ such that $c_{j_0}=0$ (note that $\ve_{j_0+1}=\ve_{j_0}$), $|c_{j_1}| = 2$, and $|c_j|=1$ for $j \not= j_0, j_1$. Moreover $\ve_{j+1} =\ve_{j}$ and $(-1)^j \ve_{j} c_{j} <0$ for $j \not= j_0$. Hence $\ve_1 = \cdots = \ve_{2r+1} = 1$, and $(-1)^{j-1} c_{j} > 0$ for $j \not= j_0$. In this case, by deleting $c_{j_0}=0$ and combining $\ve_{j_0} \mathbf{a}^{(-1)^{j_0-1}}$ with $\ve_{j_0+1} \mathbf{a}^{(-1)^{j_0}} = \ve_{j_0} \mathbf{a}^{(-1)^{j_0}}$, we see that  $K$ is of type $4b$ in Lemma \ref{braid4} with 
\begin{itemize}
\item $i_0 = j_0 (2m+1)-1$, 
\item $i_1= j_1(2m+1)$ if $j_1 < j_0$ and $i_1= j_1 (2m+1) - 2$ if $j_1 > j_0$, and 
\item length $2k = (2r+1)2m+2r-2$, so $2k + 3= (2r+1)(2m+1)$.
\end{itemize}

\underline{Case 3}: $\sum_{\substack{j=1 \\ c_j \not= 0}}^{2r} (|c_j|  - 1)=1$, $2r - \#\{c_j \not= 0\}=0$ and $(2r+1) t(\mathbf{a}) + 2 \#\{c_j \not= 0\} - t(\tilde{\mathbf{a}}) =1$. 
Then there exists $j_0$ such that $|c_{j_0}| =2$ and $|c_j|=1$ for  $j \not= j_0$, and there exists $j_1$ (it might happen that $j_1 = j_0$) such that $\ve_{j_1+1} = - \ve_{j_1}$, $\ve_{j+1} =\ve_{j}$ and $(-1)^j \ve_{j} c_{j} <0$ for $j \not=  j_1$.  This implies that $\ve_j =1$ for $j \le j_1$, and $\ve_j = -1$ for $j \ge j_1+1$. Hence $(-1)^{j-1} c_j >0$ for $j < j_1$, and $(-1)^{j} c_j>0$ for $j > j_1$. In this case $K$ is of type $4c$ in Lemma \ref{braid4} with 
\begin{itemize}
\item $i_0 = j_0 (2m+1)$,  
\item $i_1 = j_1 (2m+1)$ or $i_1 = j_1 (2m+1) -1$ (depending on whether $(-1)^{j_1 - 1} c_{j_1} >0$ or $(-1)^{j_1} c_{j_1} >0$), and 
\item length $2k = (2r+1)2m+2r$, so $2k + 1= (2r+1)(2m+1)$.
\end{itemize}

\underline{Case 4}: $\sum_{\substack{j=1 \\ c_j \not= 0}}^{2r} (|c_j|  - 1)=0$, $2r - \#\{c_j \not= 0\}=2$ and $(2r+1) t(\mathbf{a}) + 2 \#\{c_j \not= 0\} - t(\tilde{\mathbf{a}}) =0$. Then  there exist $j_0<j_1$ such that $c_{j_0} = c_{j_1} = 0$ (note that $\ve_{j_0+1}=\ve_{j_0}$ and $\ve_{j_1+1}=\ve_{j_1}$), and $|c_j|=1$ for $j \not= j_0, j_1$. Moreover $\ve_{j+1} = \ve_j$ and $(-1)^j \ve_{j} c_{j} <0$ for $j \not= j_0, j_1$. This implies that $\ve_1 = \cdots = \ve_{2r+1} = 1$, and $(-1)^{j-1}c_{j} > 0$ for $j \not= j_0, j_1$. In this case, by deleting $c_j=0$ and combining $\ve_j \mathbf{a}^{(-1)^{j-1}}$ with $\ve_{j+1} \mathbf{a}^{(-1)^j} = \ve_{j} \mathbf{a}^{(-1)^j}$ for $j=j_0, j_1$, we see that  $K$ is of type $4b$ in Lemma \ref{braid4} with 
\begin{itemize}
\item $i_0 = j_0 (2m+1)-1$, 
\item $i_1 = j_1 (2m+1)-3$, and 
\item length $2k = (2r+1)2m+2r-4$, so $2k + 5= (2r+1)(2m+1)$.
\end{itemize}

\underline{Case 5}: $\sum_{\substack{j=1 \\ c_j \not= 0}}^{2r} (|c_j|  - 1)=0$, $2r - \#\{c_j \not= 0\}=1$ and $(2r+1) t(\mathbf{a}) + 2 \#\{c_j \not= 0\} - t(\tilde{\mathbf{a}}) =1$. 
Then there exists $j_0$ such that $c_{j_0}=0$ (note that $\ve_{j_0}=\ve_{j_0+1}$) and $|c_j|=1$ for  $j \not= j_0$. There also exists $j_1 \not=  j_0$ such that $\ve_{j_1+1} = - \ve_{j_1}$, $\ve_{j+1} =\ve_{j}$ and $(-1)^j \ve_{j} c_{j} <0$ for $j \not=  j_0, j_1$.  This implies that $\ve_j =1$ for $j \le j_1$, and $\ve_j = -1$ for $j \ge j_1+1$. Hence $(-1)^{j-1} c_{j} >0$ for $j <  j_1$ and $j \not= j_0$, and $(-1)^{j}c_{j} > 0$ for $j > j_1$ and $j \not= j_0$. In this case, by deleting $c_{j_0}=0$ and combining $\ve_{j_0} \mathbf{a}^{(-1)^{j_0-1}}$ with $\ve_{j_0+1} \mathbf{a}^{(-1)^{j_0}} = \ve_{j_0} \mathbf{a}^{(-1)^{j_0}}$, we see that $K$ is of type $4c$ in Lemma \ref{braid4} with 
\begin{itemize}
\item $i_0 = j_0 (2m+1)-1$, 
\item $i_1 = j_1 (2m+1)$ or  $i_1 = j_1 (2m+1) -1$ if $j_1 < j_0$, and $i_1 = j_1 (2m+1)-2$ or $i_1 = j_1 (2m+1)-3$ if $j_1 >  j_0$ (depending on whether $(-1)^{j_1 - 1} c_{j_1} >0$ or $(-1)^{j_1} c_{j_1} >0$), 
\item length $2k = (2r+1)2m+2r-2$, so $2k + 3= (2r+1)(2m+1)$.
\end{itemize}

\underline{Case 6}: $\sum_{\substack{j=1 \\ c_j \not= 0}}^{2r} (|c_j|  - 1)=0$, $2r - \#\{c_j \not= 0\}=0$ and $(2r+1) t(\mathbf{a}) + 2 \#\{c_j \not= 0\} - t(\tilde{\mathbf{a}}) =2$. Then $|c_j|=1$ for all $j$. There are 2 subcases to consider:
\begin{enumerate}
\item There exists $j_0$ such that $\ve_{j_0+1} = \ve_{j_0}$ and $(-1)^{j_0} \ve_{j_0} c_{j_0} >0$, and $\ve_{j+1} =\ve_{j}$ and $(-1)^j \ve_{j} c_{j} <0$ for $j \not= j_0$. Hence $\ve_1 = \cdots = \ve_{2r+1} = 1$, $(-1)^{j_0} c_{j_0}>0$, and $(-1)^{j-1}c_{j}>0$ for $j \not= j_0$. In this case $K$ is of type $4d$ in Lemma \ref{braid4} with
\begin{itemize}
\item $i_0 = (2m+1)j_0-1$, 
\item $i_1 = (2m+1)j_0$, 
\item length $2k = (2r+1)2m+2r$, so $2k + 1= (2r+1)(2m+1)$.
 \end{itemize}
\item There exist $j_0 < j_1$ such that $\ve_{j+1} = - \ve_{j}$ for $j = j_0, j_1$, and $\ve_{j+1} =\ve_{j}$ and $(-1)^j \ve_{j} c_{j} <0$ for $j \not= j_0, j_1$. This implies that $\ve_j =1$ for $j \le j_0$ or $j \ge j_1 +1$, and $\ve_j =-1$ for $j_0 +1 \le j \le j_1$. Hence $(-1)^{j-1}c_j > 0$ for $j < j_0$ or $j > j_1$, and $(-1)^j c_j > 0$ for $j_0 < j < j_1$. In this case $K$ is of type $4d$ in Lemma \ref{braid4} with 
\begin{itemize}
\item $i_0 = j_0 (2m+1)$ or $i_0 = j_0 (2m+1)-1$  (depending on whether $(-1)^{j_0 - 1} c_{j_0} >0$ or $(-1)^{j_0} c_{j_0} >0$), 
\item $i_1 = j_1 (2m+1)-1$ or $i_1 = j_1 (2m+1)$ (depending on whether $(-1)^{j_1 - 1} c_{j_1} >0$ or $(-1)^{j_1} c_{j_1} >0$), 
\item length $2k = (2r+1)2m+2r$, so $2k + 1= (2r+1)(2m+1)$.
\end{itemize}
\end{enumerate}

Hence we obtain the following proposition. 

\begin{proposition} \label{braid4minimal}
Let $K$ be a two-bridge knot. If $\emph{braid}(K)=4$, then $K$ is not minimal if and only if one of the following holds
\begin{itemize}
\item[$-$] (type $4A$) $K=K([s_1 n_1, s_2 n_2, \cdots, s_{2k} n_{2k}])$ where $s_i = (-1)^{i-1}$, 
and there exists $(r,m,i_0,j_0) \in {\mathbb N}^4$ such that 
$2k+1 = (2r+1)(2m+1)$, $i_0 = j_0 (2m+1) ~(1 \leq j_0 \leq 2 r)$, $n_{i_0} = 6$ and $n_i=2$ for $i \not= i_0$.
\item[$-$]  (type $4B1$) $K=K([s_1 n_1, s_2 n_2, \cdots, s_{2k} n_{2k}])$ where $s_i = (-1)^{i-1}$, 
and there exists $(r,m,i_0,i_1,j_0,j_1) \in {\mathbb N}^6$ such that 
$2k+1 = (2r+1)(2m+1)$, $i_0 = j_0 (2m+1), i_1 = j_1 (2m+1) ~(1 \leq j_0 \not= j_1 \leq 2 r)$, $n_{i_0} = n_{i_1} = 4$ and $n_i=2$ for $i \not= i_0,i_1$.
\item[$-$]  (type $4B2$) $K=K([s_1 n_1, s_2 n_2, \cdots, s_{2k} n_{2k}])$ where $s_i = (-1)^{i-1}$, 
and there exists $(r,m,i_0,i_1,j_0,j_1) \in {\mathbb N}^6$ such that 
$2k+3 = (2r+1)(2m+1)$, $i_0 = j_0 (2m+1)-1$,  $i_1= j_1 (2m+1)$ if $j_1 < j_0$ and $i_1= j_1(2m+1) - 2$ if $j_1 > j_0 ~(1 \leq j_0 \not= j_1 \leq 2 r)$, $n_{i_0} = n_{i_1} = 4$ and $n_i=2$ for $i \not= i_0,i_1$.
\item[$-$]  (type $4B3$) $K=K([s_1 n_1, s_2 n_2, \cdots, s_{2k} n_{2k}])$ where $s_i = (-1)^{i-1}$, 
and there exists $(r,m,i_0,i_1,j_0,j_1) \in {\mathbb N}^6$ such that 
$2k+5 = (2r+1)(2m+1)$, $i_0 = j_0 (2m+1)-1$, $i_1 = j_1 (2m+1)-3 ~(1 \leq j_0 < j_1 \leq 2 r)$, $n_{i_0} = n_{i_1} = 4$ and $n_i=2$ for $i \not= i_0,i_1$.

\item[$-$]  (type $4C1$) $K=K([s_1 n_1, s_2 n_2, \cdots, s_{2k} n_{2k}])$ where 
there exist $(r,m,i_0,i_1,j_0,j_1) \in {\mathbb N}^6$ such that $2k+1 = (2r+1)(2m+1)$, $i_0 = j_0 (2m+1)$, $i_1 = j_1 (2m+1)$ or  $i_1 = j_1 (2m+1) - 1~(1 \leq j_0, j_1 \leq 2 r)$, 
$s_i = (-1)^{i-1}$ for $i \le i_1$ and $s_i = (-1)^{i}$ for $i \ge i_1+1$, $n_{i_0}=4$ and $n_i=2$ for $i \not= i_0$. 

\item[$-$]  (type $4C2$) $K=K([s_1 n_1, s_2 n_2, \cdots, s_{2k} n_{2k}])$ where 
there exist $(r,m,i_0,i_1,j_0,j_1) \in {\mathbb N}^6$ such that $2k+3 = (2r+1)(2m+1)$, $i_0 = j_0 (2m+1)-1$, 
 $i_1 = j_1 (2m+1)$ or  $i_1 = j_1 (2m+1)-1$ if $j_1 < j_0$, and $i_1 = j_1 (2m+1)-2$ or $i_1 = j_1 (2m+1)-3$ if $j_1 > j_0$,  $(1 \leq j_0 \not= j_1 \leq 2 r)$, 
$s_i = (-1)^{i-1}$ for $i \le i_1$ and $s_i = (-1)^{i}$ for $i \ge i_1+1$, $n_{i_0}=4$ and $n_i=2$ for $i \not= i_0$.

\item[$-$]  (type $4D$) $K=K([2s_1, 2s_2, \cdots, 2s_{2k}])$ where 
there exist $(r,m,i_0,i_1,j_0,j_1) \in {\mathbb N}^6$ such that $2k + 1= (2r+1)(2m+1)$, $i_0 = j_0 (2m+1)-1$ or $i_0 = j_0 (2m+1)$, $i_1 = j_1 (2m+1)-1$ or $i_1 = j_1 (2m+1) ~(1 \leq j_0 \le   j_1 \leq 2 r)$ and $i_0 < i_1$, $s_i = (-1)^{i-1}$ for $i \leq i_0$ or $i \ge i_1+1$ and $s_i = (-1)^{i}$ for $i_0+1 \le i \le i_1$.

\end{itemize}
\end{proposition}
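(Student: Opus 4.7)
The plan is to run a case analysis entirely analogous to the one used for Proposition \ref{braid3minimal}, but now with ``excess'' $2$ instead of $1$. The setup is identical: if $K$ is not minimal there is a non-trivial $K'$ with $K\ge K'$; by Theorem \ref{ineq} we have $3\,\text{braid}(K')-4 \le 4$, forcing $\text{braid}(K')=2$, and Lemma \ref{braid2} identifies $K'=K([\mathbf{a}])$ with $\mathbf{a}=(\underbrace{2,-2,\ldots,2,-2}_{2m})$. Applying Theorem \ref{thm:ors}, $K=K([\tilde{\mathbf{a}}])$ for a sequence $\tilde{\mathbf{a}}$ of the stated form, parametrised by $r$, $\varepsilon_j$, and $c_j$. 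From the chain of equalities that established Theorem \ref{ineq}, and since $\text{braid}(K')-2=0$, the terms $(2r-2)(\text{braid}(K')-2)$ and $3(\text{braid}(K')-2)$ both vanish, leaving
\[
A+B+C \;=\; \text{braid}(K)-2 \;=\; 2,
\]
where $A,B,C$ denote the left-hand sides of (\ref{ineq2}), (\ref{ineq3}), (\ref{ineq4}) respectively.

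I would then enumerate the six admissible triples $(A,B,C)\in\{(2,0,0),(1,1,0),(1,0,1),(0,2,0),(0,1,1),(0,0,2)\}$. For each, determining the configuration of $\varepsilon_j$ and $c_j$ is routine: $B$ counts the indices with $c_j=0$, $A$ measures how much $|c_j|$ exceeds $1$, and $C$ counts the sign changes of $\varepsilon_\bullet$ that are not forced by the local conditions $\varepsilon_{j+1}=\varepsilon_j$ and $(-1)^j\varepsilon_j c_j<0$. Writing out $\tilde{\mathbf{a}}$ for the torus-knot $\mathbf{a}$ and applying the deletion-and-combination rule at every index where $c_j=0$, one reads off a reduced even continued fraction for $K$ and matches it against the four shapes $(4a),(4b),(4c),(4d)$ of Lemma \ref{braid4}. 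I expect: Case 1 splits into two subcases and yields types $4A$ and $4B1$; Case 2 yields $4B2$; Case 3 yields $4C1$; Case 4 yields $4B3$; Case 5 yields $4C2$; Case 6 splits into two subcases both falling under type $4D$. The converse is immediate from the same construction: given $K$ of any listed form, one reverses the rewriting to exhibit explicit data $(r,m,\varepsilon_\bullet,c_\bullet)$ realising $K$ as $K([\tilde{\mathbf{a}}])$, which by Theorem \ref{thm:ors} produces an epimorphism $G(K)\to G(T(2m+1,2))$.

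The main obstacle will be purely bookkeeping: tracking what the indices $i_0,i_1$ become in the final reduced continued fraction after one or two deletions of vanishing $c_j$'s and the subsequent combinations of $\varepsilon_j\mathbf{a}^{(-1)^{j-1}}$ with $\varepsilon_{j+1}\mathbf{a}^{(-1)^j}$. Each deletion at position $j_\ast$ shortens the total length by $2$ and shifts all later positions by $-2$, so in Cases 2 and 5 the relative order of $j_0$ and $j_1$ changes the formula for $i_1$ (whence the split into $j_1<j_0$ versus $j_1>j_0$ in types $4B2$ and $4C2$); Case 4 requires two such shifts, giving the offsets $i_0=j_0(2m+1)-1$ and $i_1=j_1(2m+1)-3$ in type $4B3$. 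In Cases 3, 5, and 6 there is the additional subtlety that a sign change of $\varepsilon$ at $j=j_1$ produces $i_1=j_1(2m+1)$ or $i_1=j_1(2m+1)-1$ depending on the sign of $(-1)^{j_1-1}c_{j_1}$, and the analogous dichotomy at $j_0$ in Case 6(2) accounts for the four branches of type $4D$. Once these index-accounting rules are pinned down, each case reduces to a direct comparison with the templates of Lemma \ref{braid4}.
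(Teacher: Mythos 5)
Your proposal is correct and follows essentially the same route as the paper: reduce to $\mathrm{braid}(K')=2$ via Theorem \ref{ineq} and Lemma \ref{braid2}, observe that $A+B+C=\mathrm{braid}(K)-2=2$ with $A,B,C$ the left-hand sides of (\ref{ineq2})--(\ref{ineq4}), enumerate the six triples, and match each resulting reduced continued fraction against the shapes of Lemma \ref{braid4}; your case-to-type assignments (Case 1 $\to 4A,4B1$; Case 2 $\to 4B2$; Case 3 $\to 4C1$; Case 4 $\to 4B3$; Case 5 $\to 4C2$; Case 6 $\to 4D$) and the index-shift bookkeeping (each deleted $c_j=0$ shortening the sequence by $2$, hence the offsets $-1$ and $-3$ and the $j_1<j_0$ versus $j_1>j_0$ split) agree exactly with what the paper carries out.
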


\subsection{Examples}

In this subsection, we provide all the non minimal two-bridge knots with braid index up to $4$ and up to $15$ crossings. 
Then all the knots with braid index up to $4$ and up to $15$ crossings which are not on the Table \ref{table_nonminimal} are minimal. 
Here we do not distinguish a knot with its mirror image. 

\begin{table}[ht]
\begin{tabular}{c|c|c|l|l}
braid index & type & $c(K)$ & even continued fraction & onto  \\
\hline
$2$ & $2$ & $9$ & $[2, -2, 2, -2, 2, -2, 2, -2]$ & $3_1$ \\ 
$2$ & $2$ & $15$ & $[2, -2, 2, -2, 2, -2, 2, -2, 2, -2, 2, -2, 2, -2]$ & $3_1$ and $5_1$ \\
\hline 
$3$ & $3A1$ & $11$ & $[2, -2, 2, -2, 2, -4, 2, -2]$ & $3_1$ \\ 
$3$ & $3A2$ & $9$ & $[2, -4, 2, -2, 2, -2]$ & $3_1$ \\ 
$3$ & $3A2$ & $15$ & $[2, -4, 2, -2, 2, -2, 2, -2, 2, -2, 2, -2]$ & $3_1$ \\ 
$3$ & $3A2$ & $15$ & $[2, -2, 2, -2, 2, -2, 2, -4, 2, -2, 2, -2]$ & $3_1$ \\ 
$3$ & $3A2$ & $15$ & $[2, -2, 2, -4, 2, -2, 2, -2, 2, -2, 2, -2]$ & $5_1$ \\ 
$3$ & $3B$ & $10$ & $[2, -2, -2, 2, -2, 2, -2, 2]$ & $3_1$ \\ 
$3$ & $3B$ & $10$ & $[2, -2, 2, -2, 2, 2, -2, 2]$ & $3_1$ \\ 
\hline
$4$ & $4A$ & $13$ & $[2, -2, 2, -2, 2, -6, 2, -2]$ & $3_1$ \\ 
$4$ & $4B1$ & $13$ & $[2, -2, 4, -2, 2, -4, 2, -2]$ & $3_1$ \\ 
$4$ & $4B2$ & $11$ & $[2, -4, 2, -4, 2, -2]$ & $3_1$ \\ 
$4$ & $4B3$ & $9$ & $[2, -4, 4, -2]$ & $3_1$ \\ 
$4$ & $4B3$ & $15$ & $[2, -4, 2, -2, 2, -4, 2, -2, 2, -2]$ & $3_1$ \\ 
$4$ & $4B3$ & $15$ & $[2, -4, 2, -2, 2, -2, 2, -2, 4, -2]$ & $3_1$ \\ 
$4$ & $4B3$ & $15$ & $[2, -4, 4, -2, 2, -2, 2, -2, 2, -2]$ & $3_1$ \\ 
$4$ & $4B3$ & $15$ & $[2, -2, 2, -2, 4, -4, 2, -2, 2, -2]$ & $3_1$ \\ 
$4$ & $4B3$ & $15$ & $[2, -2, 2, -4, 2, -2, 4, -2, 2, -2]$ & $5_1$ \\ 
$4$ & $4C1$ & $12$ & $[2, -2, -4, 2, -2, 2, -2, 2]$ & $3_1$ \\ 
$4$ & $4C1$ & $12$ & $[2, -2, -2, 2, -2, 4, -2, 2]$ & $3_1$ \\ 
$4$ & $4C1$ & $12$ & $[2, -2, 2, -2, 2, 4, -2, 2]$ & $3_1$ \\ 
$4$ & $4C1$ & $12$ & $[2, -2, 2, 2, -2, 4, -2, 2]$ & $3_1$ \\ 
$4$ & $4C2$ & $10$ & $[2, -4, 2, -2, -2, 2]$ & $3_1$ \\ 
$4$ & $4C2$ & $10$ & $[2, -4, 2, 2, -2, 2]$ & $3_1$ \\ 
$4$ & $4D$ & $11$ & $[2, -2, -2, -2, 2, -2, 2, -2]$ & $3_1$ \\ 
$4$ & $4D$ & $11$ & $[2, -2, -2, 2, -2, -2, 2, -2]$ & $3_1$ \\ 
$4$ & $4D$ & $11$ & $[2, -2, -2, 2, -2, 2, 2, -2]$ & $3_1$ \\ 
$4$ & $4D$ & $11$ & $[2, -2, 2, 2, -2, -2, 2, -2]$ & $3_1$  
\end{tabular}
\bigskip
\caption{non minimal two-bridge knots with braid index up to $4$ and up to $15$ crossings}
\label{table_nonminimal}
\end{table}

\section{Average braid index} \label{avg}

In this section, we compute the average braid index of all the two-bridge knots with a given crossing number. 

We first recall a  known result about the number of two-bridge knots with $c$ crossings. 

\begin{theorem}[Ernst-Sumners \cite{ernstsumners}] \label{thmernstsumners}
For $c \geq 3$, the number of two-bridge knots with $c$ crossings is given by 
\[
TK(c) = 
\left\{
\begin{array}{ll}
\displaystyle{\frac{1}{3} (2^{c-2} - 1)} & c \equiv 0 \pmod{2} 
\smallskip\\
\displaystyle{\frac{1}{3} (2^{c-2} + 2^{(c-1)/2})} & c \equiv 1 \pmod{4} 
\smallskip\\ 
\displaystyle{\frac{1}{3} (2^{c-2} + 2^{(c-1)/2} + 2)} & c \equiv 3 \pmod{4} 
\end{array}
\right. .
\]
\end{theorem}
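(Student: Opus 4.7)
The plan is to reduce the count to a combinatorial problem on compositions of $c$ via the Schubert classification, then apply Burnside's lemma together with a parity analysis of continued fraction convergents.

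Every two-bridge knot has a unique Schubert parameter $p/q$ with $p$ odd, $0 < q < p$, $\gcd(p,q)=1$, modulo the identification $q \sim q^{\pm 1} \pmod{p}$. Writing $p/q$ as a positive continued fraction $[b_1, b_2, \ldots, b_n]$ with $b_n \geq 2$, the minimal crossing number equals $\sum_i b_i$ (a theorem of Murasugi for two-bridge knots), and the inversion $q \mapsto q^{-1}$ corresponds to reversing $(b_1, \ldots, b_n)$. Therefore $TK(c)$ equals the number of reversal-orbits of compositions of $c$ with last part $\geq 2$ whose induced numerator $p$ is odd.

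I would first count such compositions ignoring reversal. Without the odd-$p$ condition there are exactly $2^{c-2}$ of them. The parity of $p$ is controlled by the recursion $p_k = b_k p_{k-1} + p_{k-2}$, which reduces modulo $2$ to a finite-state dynamical system on $(p_{k-1}, p_k) \bmod 2$; a transfer-matrix calculation shows that asymptotically a fraction $\tfrac{2}{3}$ of admissible compositions yield odd $p$, up to periodic corrections depending on $c \bmod 4$. This is the origin of the overall factor $1/3$ in the stated formula. Then Burnside's lemma applied to the reversal involution gives
\[
TK(c) = \tfrac{1}{2}\bigl(N_{\mathrm{odd}}(c) + F_{\mathrm{odd}}(c)\bigr),
\]
where $N_{\mathrm{odd}}(c)$ counts all odd-$p$ compositions and $F_{\mathrm{odd}}(c)$ the palindromic ones. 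Palindromes are determined by their first half (plus a middle entry when the length is odd), so $F_{\mathrm{odd}}(c)$ grows like $2^{c/2}$; this is exactly where the $2^{(c-1)/2}$ secondary term comes from. A final case analysis on $c \bmod 4$ reconciles the main contribution $2^{c-2}/3$, the palindrome contribution, and the small correction constants $-1$, $0$, or $+2$, with the extra $+2$ in the $c \equiv 3 \pmod 4$ case traceable to a handful of exceptional short palindromic compositions (notably those corresponding to the $(2,c)$-torus knot and its inverse class).

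The main obstacle will be carrying out the parity analysis cleanly: coupling the three-state odd-$p$ condition with the reversal action so that the answer emerges in the tidy form above. The underlying $3 \times 3$ transfer matrix drives both the leading $2^{c-2}$ term and the $2^{(c-1)/2}$ correction simultaneously, but matching these contributions case by case modulo $4$, and correctly accounting for a small number of boundary/palindrome exceptions, is the delicate bookkeeping required to pin down the stated closed-form expression.
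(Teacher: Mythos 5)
This theorem is not proved in the paper at all: it is quoted verbatim from Ernst--Sumners \cite{ernstsumners}, so there is no in-paper argument to compare yours against. Your outline is essentially a reconstruction of the Ernst--Sumners strategy (Schubert normal form $p/q$, crossing number as the sum of the entries of a positive continued fraction, a count of compositions, and a Burnside-type argument for the symmetry $q\sim q^{-1}$), and the sanity checks you give (the $2^{c-2}$ compositions with last part $\ge 2$, the $2/3$ density of odd $p$, the palindromes producing the $2^{(c-1)/2}$ term) are all consistent with the stated formula. However, as written the proposal has a genuine gap rather than mere ``delicate bookkeeping.''

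The central problem is the identification of the algebraic involution $q\mapsto q^{-1}\pmod p$ with reversal of the composition $(b_1,\dots,b_n)$. Using the matrix identity $\binom{b_1\ 1}{1\ 0}\cdots\binom{b_n\ 1}{1\ 0}=\binom{p\ \ *}{q\ \ *}$ and transposing, one finds that $[b_n,\dots,b_1]=p/q'$ with $q'\equiv(-1)^{n+1}q^{-1}\pmod p$. So reversal realizes $q\mapsto q^{-1}$ only when $n$ is odd; when $n$ is even it realizes $q\mapsto -q^{-1}$, i.e.\ the \emph{mirror} of the inverse class --- and since $TK(c)$ counts mirror images as distinct knots (the paper has a separate $TK^*(c)$), this sign cannot be ignored. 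Worse, reversal does not even preserve your normal form: $[1,1,2]=5/3$ reverses to $[2,1,1]$, which renormalizes to $[2,2]=5/2$, while $[2,2]$ is already a palindrome; so ``reverse and renormalize'' is not an involution on the set you are averaging over, and Burnside's lemma as you set it up does not apply. (Concretely, the actual involution $q\mapsto q^{-1}$ sends $[2,2]\leftrightarrow[1,1,2]$, which is not reversal.) The standard repair is to work with a normal form on which the symmetry acts honestly --- e.g.\ continued fractions of fixed length parity, or the reduced even expansions $[2a_1,\dots,2a_{2m}]$ used throughout this paper, where the relevant identifications are exactly $\mathbf{a}\mapsto\pm\mathbf{a}^{-1}$ as recalled in Section \ref{prelim}. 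Beyond this structural issue, the quantities $N_{\mathrm{odd}}(c)$, $F_{\mathrm{odd}}(c)$ and the constants $-1$, $0$, $+2$ are asserted rather than computed (your attribution of the $+2$ to the $(2,c)$-torus knot is a guess, not a derivation), so the case analysis modulo $4$ that actually produces the three displayed formulas remains to be done.
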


Let $\overline{\mathrm{braid}}_c$ denote the average braid index of all the two-bridge knots with $c$ crossings. We will prove the following. 

\begin{theorem}\label{mainthm}
For $c \geq 3$, we have
\[
\overline{\emph{braid}}_c =
\left\{
\begin{array}{ll}
\displaystyle{\frac{3c+11}{9}  + \frac{2c-4}{3(2^{c-2}-1)}} & c \equiv 0 \pmod{2} 
\smallskip \\
\displaystyle{\frac{3c+11}{9}   - \frac{2^{(c+3)/2} + 9 c - 19}{9 (2^{c-2} + 2^{(c-1)/2})}} & c \equiv 1 \pmod{4} 
\smallskip\\
\displaystyle{\frac{3c+11}{9}   - \frac{2^{(c+3)/2} + 3 c - 5}{9 (2^{c-2} + 2^{(c-1)/2}+2)}} & c \equiv 3 \pmod{4}
\end{array}
\right. .
\]
In particular,  $\overline{\emph{braid}}_c \sim \displaystyle{\frac{c}{3} + \frac{11}{9}}$ as $c \to \infty$.
\end{theorem}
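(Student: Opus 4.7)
The plan is to reduce everything to counting the sign‑change statistic $t(\mathbf{a})$ and then attack it with bivariate generating functions. By the formula
$\mathrm{braid}(K) = \tfrac{1}{2}c(K) + 1 - \tfrac{1}{2} t(\mathbf{a})$
from Section~\ref{prelim}, summing over all two‑bridge knots with $c$ crossings yields
\[
\overline{\mathrm{braid}}_c \;=\; \frac{c+2}{2} \;-\; \frac{T(c)}{2\, TK(c)},
\]
where $T(c) := \sum_{K:\, c(K)=c} t(\mathbf{a}_K)$. Since $TK(c)$ is supplied by Theorem~\ref{thmernstsumners}, the task reduces to computing $T(c)$. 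Let $N(c,t)$ be the number of \emph{all} reduced even continued fractions $\mathbf{a}$ with $c(K([\mathbf{a}]))=c$ and $t(\mathbf{a})=t$, and let $P(c,t)$ be the number of those that are \emph{palindromic}, i.e.\ satisfy $\mathbf{a} = -\mathrm{rev}(\mathbf{a})$. Since two reduced even continued fractions represent the same knot iff they differ by $-\mathrm{rev}$, orbit counting gives
\[
TK(c) = \tfrac{1}{2}(N(c)+P(c)), \qquad T(c) = \tfrac{1}{2}(T_N(c)+T_P(c)),
\]
with $N(c)=\sum_t N(c,t)$, $T_N(c)=\sum_t t\, N(c,t)$, and analogous notation for $P$.

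I would then derive the bivariate generating functions by viewing each sequence as a choice of signs and magnitudes: position $i$ carries magnitude weight $x^{2|a_i|}$, giving $x^2/(1-x^2)$ after summing over $|a_i|\ge 1$, while each sign change contributes an extra factor of $y/x$ (one $y$ for the statistic, one $x^{-1}$ because the crossing number decreases by one for each sign change). A direct geometric summation over the length $2m$ yields
\[
F(x,y) := \sum_{c,t} N(c,t)\, x^c y^t = \frac{2x^3(x+y)}{(1-x^2)^2 - x^2(x+y)^2}.
\]
For palindromic sequences I parametrize by the first half $(a_1,\ldots,a_m)$, noting that the middle position forces one unavoidable sign change and that each sign change in the half contributes two sign changes to the full sequence, so $c = 4\sum|a_i| - 2t_1 - 1$ and $t = 2t_1 + 1$. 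A parallel calculation gives
\[
H(x,y) := \sum_{c,t} P(c,t)\, x^c y^t = \frac{2x^3 y}{1 - x^2 y^2 - 2x^4},
\]
confirming in particular that $P(c,t)=0$ for even $c$.

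The final step is to extract $N(c), T_N(c), P(c), T_P(c)$ from $F(x,1), \partial_y F(x,1), H(x,1), \partial_y H(x,1)$. Using the factorizations $(1-x^2)^2-x^2(1+x)^2 = (1+x)^2(1-2x)$ and $1-x^2-2x^4=(1-2x^2)(1+x^2)$, all four series are rational with (simple or double) poles at $x\in\{-1,1/2\}$ or $x^2\in\{-1,1/2\}$, yielding explicit coefficients in terms of $2^{c-1}$, $2^{(c-1)/2}$, $(-1)^c$, and linear‑in‑$c$ corrections coming from the double poles. Substituting into $\overline{\mathrm{braid}}_c = \frac{c+2}{2} - \frac{T_N(c)+T_P(c)}{2(N(c)+P(c))}$ and simplifying case by case ($c$ even, $c\equiv 1\pmod 4$, $c\equiv 3\pmod 4$) should produce the three stated closed forms, and the asymptotic $\overline{\mathrm{braid}}_c\sim c/3+11/9$ then follows from the dominant $2^{c-1}$ terms. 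The main obstacle is the cumulative bookkeeping: the double poles in $\partial_y F(x,1)$ and $\partial_y H(x,1)$ make the coefficient extraction slightly delicate, and one must carry out the three parity cases in parallel and then reconcile them with the Ernst--Sumners denominator before the compact formulas emerge.
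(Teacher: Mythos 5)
Your proposal is correct in substance but runs the computation through a genuinely different engine than the paper. Both arguments share the first reduction: since $\mathrm{braid}(K)=\tfrac12 c(K)+1-\tfrac12 t(\mathbf a)$, everything comes down to the total sign change $T(c)=TS(c)$, and both implicitly use the same orbit decomposition (the two-term shape of the paper's Proposition \ref{A1} is exactly your $\tfrac12(N+P)$ split into all sequences plus $(-\mathrm{rev})$-palindromic ones). The difference is how the enumeration is done. The paper imports explicit binomial-coefficient formulas for $N_{c,\ell}$ from \cite{ST2}, collapses the inner sums with Lemma \ref{A-simplify} and its odd-$c$ analogues, and then evaluates $\sum_\ell \ell N_{c,\ell}$ using the identities of Lemmas \ref{lem22} and \ref{lem23}, handling the parities $c\equiv 0,1,3 \pmod 4$ by separate summations. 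You instead encode the whole count in the two rational bivariate series $F(x,y)=\frac{2x^3(x+y)}{(1-x^2)^2-x^2(x+y)^2}$ and $H(x,y)=\frac{2x^3y}{1-x^2y^2-2x^4}$; I checked both transfer-matrix derivations and they are right (e.g.\ $F(x,1)=\frac{2x^3}{(1+x)(1-2x)}$ and $H(x,1)=\frac{2x^3}{(1-2x^2)(1+x^2)}$ reproduce Ernst--Sumners via $TK(c)=\tfrac12(N(c)+P(c))$, and the $2^c$-coefficients of $\partial_yF(x,1)=\frac{2x^3(1-2x+2x^2)}{(1+x)^2(1-2x)^2}$ match the paper's $TS(2k)$). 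What your route buys is that the three parity cases and the linear-in-$c$ corrections fall out automatically from the pole structure ($x=1/2,-1$ simple and double; $x^2=1/2,-1$ for the palindromic part) rather than from ad hoc binomial identities; what it costs is that the partial-fraction bookkeeping you flag as the "main obstacle" is real and still has to be executed to land on the exact closed forms. Two small points to make the write-up airtight: justify the Burnside step by citing the fact (used implicitly by the paper via \cite{ST2}) that two reduced even continued fractions give the same knot only when related by $\mathbf a\mapsto-\mathrm{rev}(\mathbf a)$ and that the stated crossing-number formula is the actual crossing number; and note that the $\sim c/3+11/9$ asymptotic follows from the ratio of the double-pole term $\frac{c+1}{18}2^c$ in $T_N(c)$ to the simple-pole term $\frac{2}{3}2^{c-2}$ in $N(c)$.
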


Recall  that  the crossing number and braid index of a two-bridge knot $K=K([2 a_1,2 a_2, \ldots, 2 a_{2m}])$, where $a_i \neq 0$,  are given by
\begin{eqnarray*}
c(K) &=& \left(\sum_{i=1}^{2m} 2|a_i| \right) - \ell, \\
\text{braid}(K) &=& \left(\sum_{i=1}^{2m} |a_i| \right) - \ell +1,
\end{eqnarray*}
where $\ell = t(\mathbf{a})$ is the number of sign changes in the sequence $\mathbf{a} = (2 a_1, 2 a_2, \ldots, 2 a_{2m})$. This implies that $\mathrm{braid}(K) = \frac{1}{2} c(K) +1 - \frac{1}{2}  \ell$. Note that $0 \le \ell \le 2m-1$, $\ell$ and $c$ have the same parity, and $(c + \ell)/2=\sum_{i=1}^{2m} |a_i| \ge 2m$. Hence $\lceil (\ell+1)/2 \rceil \le m \le \lfloor (c + \ell)/4 \rfloor$. In particular, $\ell \le c-4$ if $c$ is even and $\ell \le c-2$ if $c$ is odd.

Let $N_{c,\ell}$ denote the number of two-bridge knots $K([2a_1, 2a_2, \dots, 2a_{2m}])$  with crossing number $c$ and with number of sign changes $\ell$ in the sequence $(2a_1, 2a_2, \dots, 2a_{2m})$. Then we have the following. 

\begin{proposition} \label{A1}
If $(c + \ell)/2$ is even and $\ell$ is odd, then
\begin{eqnarray*}
N_{c,\ell} 
&=& \binom{(c + \ell)/2-1}{\ell} \sum_{(\ell+1)/2 \le m  \le (c + \ell)/4}    \binom{(c - \ell)/2-1}{2m-1-\ell} \\
&&  + \,  \binom{(c + \ell)/4-1}{(\ell-1)/2} \sum_{(\ell+1)/2 \le m  \le (c + \ell)/4}     \binom{(c - \ell)/4-1/2}{m-1-(\ell-1)/2}.
\end{eqnarray*} 
Otherwise,
$$
N_{c,\ell} = \binom{(c + \ell)/2-1}{\ell}  \sum_{(\ell+1)/2 \le  m \le (c + \ell)/4}  \binom{(c - \ell)/2-1}{2m-1-\ell}.
$$
\end{proposition}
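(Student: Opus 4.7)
The plan is to count all reduced even continued fractions with the given parameters and then quotient by the knot equivalence recalled in Section~\ref{prelim}. Writing $s=(c+\ell)/2$, a sequence $\mathbf{a}=(2a_1,\dots,2a_{2m})$ with each $a_i\neq 0$, $\sum_i|a_i|=s$, and exactly $\ell$ sign changes is specified by three independent choices: a composition of $s$ into $2m$ positive parts ($\binom{s-1}{2m-1}$ of them), a choice of which $\ell$ of the $2m-1$ consecutive pairs in $\mathbf{a}$ carry a sign change ($\binom{2m-1}{\ell}$ of them), and the sign of $a_1$ (a factor of $2$). The identity $\binom{s-1}{2m-1}\binom{2m-1}{\ell}=\binom{s-1}{\ell}\binom{s-1-\ell}{2m-1-\ell}$, summed over admissible $m\in[(\ell+1)/2,(c+\ell)/4]$, yields a total count
\[
S(c,\ell)=2\binom{(c+\ell)/2-1}{\ell}\sum_{m}\binom{(c-\ell)/2-1}{2m-1-\ell}.
\]

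Under the convention of Theorem~\ref{thmernstsumners} (which distinguishes mirror images, as the check $TK(3)=2$ confirms), two reduced even continued fractions represent the same two-bridge knot iff they are related by the involution $\tau\colon(a_1,\dots,a_{2m})\mapsto(-a_{2m},\dots,-a_1)$. Burnside's lemma applied to the action of $\langle\tau\rangle$ therefore gives
\[
N_{c,\ell}=\tfrac{1}{2}\bigl(S(c,\ell)+F(c,\ell)\bigr),
\]
where $F(c,\ell)$ counts the $\tau$-fixed sequences.

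A $\tau$-fixed $\mathbf{a}$ satisfies $a_i=-a_{2m+1-i}$, so $s=2\sum_{i=1}^m|a_i|$ is even, a sign change is forced at the central gap $(m,m+1)$, and the remaining sign changes pair up mirror-symmetrically; hence $\ell=2k+1$ is odd, with $k$ the number of sign changes in the first half $(a_1,\dots,a_m)$. When either parity condition fails, $F(c,\ell)=0$ and the generic case of the proposition follows immediately. When both conditions hold, $\mathbf{a}$ is determined by $(a_1,\dots,a_m)$, which is an arbitrary signed composition of $(c+\ell)/4$ into $m$ nonzero parts with exactly $(\ell-1)/2$ sign changes among its $m-1$ gaps; repeating the composition-sign-identity argument produces
\[
F(c,\ell)=2\binom{(c+\ell)/4-1}{(\ell-1)/2}\sum_{m}\binom{(c-\ell)/4-1/2}{m-1-(\ell-1)/2},
\]
and inserting $S(c,\ell)$ and $F(c,\ell)$ into the Burnside identity produces the two-term expression exactly as stated.

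The main (minor) obstacle is the fixed-point parity analysis: verifying that $\tau$-fixation forces the parities of $s$ and $\ell$ simultaneously, so that the two-term case of the proposition corresponds exactly to $(c+\ell)/2$ even and $\ell$ odd, and observing that in that regime $c-\ell\equiv 2\pmod 4$, which makes the upper argument $(c-\ell)/4-1/2$ automatically a nonnegative integer and so renders the second binomial well-defined.
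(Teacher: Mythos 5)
Your proof is correct, and it is genuinely more self-contained than the one in the paper: the authors do not re-derive the count at all, but simply quote the weighted sum $A_{c,\ell}=\sum_m m\cdot(\text{count})$ from \cite[Proposition 3.2]{ST2} and ``forget $m$'' in each term, so the actual combinatorics lives in the earlier paper. What you supply is essentially that underlying argument: the factorization of a signed composition into (absolute values) $\times$ (sign-change positions) $\times$ (initial sign), the subset-of-a-subset identity $\binom{s-1}{2m-1}\binom{2m-1}{\ell}=\binom{s-1}{\ell}\binom{s-1-\ell}{2m-1-\ell}$, and Burnside's lemma for the involution $\tau(a_1,\dots,a_{2m})=(-a_{2m},\dots,-a_1)$, with the fixed-point analysis correctly forcing $s$ even and $\ell$ odd and yielding the second term via the arithmetic $s/2-1-(\ell-1)/2=(c-\ell)/4-1/2$. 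All of these steps check out, including the summation range $(\ell+1)/2\le m\le (c+\ell)/4$. The one ingredient you use without comment is the uniqueness of the reduced even continued fraction up to $\tau$ (so that orbits of $\tau$ are in bijection with knots, mirror images distinguished); the paper also takes this classification fact for granted in Section 2, so this is not a gap, but it is worth flagging as the place where knot theory, rather than combinatorics, enters. The trade-off between the two routes is clear: the paper's derivation is one line but opaque without \cite{ST2} in hand, while yours is verifiable on the spot and makes transparent why the extra term appears exactly when $(c+\ell)/2$ is even and $\ell$ is odd.
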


We obtained the total genus $A_{c,\ell}$ in \cite[Proposition 3.2]{ST2}, which is the sum of genera of all two-bridge knots $K([2 a_1,2 a_2, \ldots, 2 a_{2m}])$ with crossing number $c$ and with number of sign changes $\ell$ in the sequence $(2a_1, 2a_2, \dots, 2a_{2m})$. 
By forgetting $m$ in each term of $A_{c,\ell}$, we get Proposition \ref{A1}. 

Similarly, we use the following identities which are shown in \cite[Lemmas 2.2 and 2.3]{ST2}. 

\begin{lemma}\label{lem22} 
We have
\begin{eqnarray*}
\sum_{q=0}^{n-1}  2^{q} \binom{2n-1-q}{q} &=& \frac{4^n-1}{3}, \\
\sum_{q=0}^{n}  2^{q} \binom{2n-q}{q} &=& \frac{2 \cdot 4^n + 1}{3}.
\end{eqnarray*}
\end{lemma}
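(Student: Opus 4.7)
The plan is to exploit the identity $\mathrm{braid}(K) = \tfrac{1}{2} c(K) + 1 - \tfrac{1}{2}\ell$, where $\ell = t(\mathbf{a})$ is the number of sign changes in the reduced even continued fraction. Summing over all two-bridge knots with crossing number $c$ and setting $S(c) := \sum_\ell \ell \cdot N_{c,\ell}$, one gets
$$
\overline{\mathrm{braid}}_c \;=\; \frac{c+2}{2} \;-\; \frac{S(c)}{2\, TK(c)},
$$
so that, since $TK(c)$ is supplied by Theorem \ref{thmernstsumners}, the entire problem reduces to a closed-form evaluation of $S(c)$.

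To compute $S(c)$ I would insert the two formulas of Proposition \ref{A1} and swap the order of summation in $m$ and $\ell$. Writing $S(c) = S_1(c) + S_2(c)$, the "generic" piece is
$$
S_1(c) \;=\; \sum_\ell \ell \binom{(c+\ell)/2-1}{\ell} \sum_{m} \binom{(c-\ell)/2-1}{2m-1-\ell},
$$
while the "palindromic" correction, present only when $c$ is odd, is
$$
S_2(c) \;=\; \sum_{\substack{\ell \text{ odd}\\ (c+\ell)/2 \text{ even}}} \ell \binom{(c+\ell)/4-1}{(\ell-1)/2} \sum_{m} \binom{(c-\ell)/4-1/2}{m-1-(\ell-1)/2}.
$$
The inner $m$-sum in $S_1(c)$ collects every other binomial coefficient $\binom{(c-\ell)/2-1}{j}$ and hence collapses to $2^{(c-\ell)/2-2}$ (with a small adjustment at the boundary $\ell = c-4$ for $c$ even, or $\ell = c-2$ for $c$ odd); likewise the inner sum in $S_2(c)$ simplifies via a standard half-integer binomial identity. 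After these collapses, $S_1(c)$ and $S_2(c)$ become single sums of the form $\sum_\ell \ell\, 2^{q(\ell)} \binom{N(\ell)}{\ell}$, which I would handle by combining the two identities of Lemma \ref{lem22} with their $\ell$-weighted companions, obtained either by differentiating the generating identity $\sum_{q \ge 0} x^q \binom{N-q}{q}$ or by the Pascal shift $\ell \binom{N-1}{\ell} = \ell \binom{N-2}{\ell-1} + \ell \binom{N-2}{\ell}$ followed by re-indexing.

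With closed forms for $S_1(c)$ and $S_2(c)$ in hand, the final step is the three-way parity split. For $c \equiv 0 \pmod{2}$ only $S_1$ contributes and division by $TK(c) = (2^{c-2}-1)/3$ yields the first case; for $c \equiv 1, 3 \pmod{4}$ both pieces contribute, the "main" part of the quotient produces $\tfrac{3c+11}{9}$, and the $2^{(c-1)/2}$-type terms in $S_2(c)$ combined with the denominator $TK(c)$ give exactly the corrections $\pm\frac{2^{(c+3)/2} + \cdots}{9(\cdots)}$ in the statement. The asymptotic $\overline{\mathrm{braid}}_c \sim \tfrac{c}{3} + \tfrac{11}{9}$ then falls out because every correction is $O(c/2^c)$ or $O(1/2^{c/2})$.

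The main obstacle is bookkeeping rather than conceptual: the parity case split in Proposition \ref{A1}, compounded with the three residues of $c$ modulo $4$, gives six or seven subcases that must all reconcile to the three advertised closed forms. Tracking the summation bounds precisely enough that the simplified inner sums line up with Lemma \ref{lem22} and its $\ell$-weighted companion without off-by-one errors is where essentially all the effort sits; once those identities apply cleanly, the remainder is mechanical algebra.
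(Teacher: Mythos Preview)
Your proposal addresses the wrong statement. Lemma~\ref{lem22} asserts the two purely combinatorial identities
\[
\sum_{q=0}^{n-1} 2^{q}\binom{2n-1-q}{q}=\frac{4^{n}-1}{3},\qquad
\sum_{q=0}^{n} 2^{q}\binom{2n-q}{q}=\frac{2\cdot 4^{n}+1}{3},
\]
whereas what you have sketched is a proof of Theorem~\ref{mainthm}, the closed form for $\overline{\mathrm{braid}}_c$. In fact your outline explicitly \emph{invokes} Lemma~\ref{lem22} (``which I would handle by combining the two identities of Lemma~\ref{lem22} with their $\ell$-weighted companions''), so it cannot serve as a proof of that lemma without circularity.

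For the record, the paper itself does not prove Lemma~\ref{lem22} either: it simply cites the identities from \cite[Lemmas~2.2 and~2.3]{ST2}. An actual proof of the lemma would proceed by one of the standard techniques for such sums (induction on $n$ via Pascal's rule, a generating-function argument for $\sum_{q\ge 0}\binom{N-q}{q}x^{q}$ evaluated at $x=2$, or recognizing the sums as specializations of a Fibonacci-type recurrence); none of this appears in your proposal.
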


\begin{lemma}\label{lem23}
We have 
\begin{eqnarray*}
\sum_{q=0}^{n-1}  q \, 2^{q} \binom{2n-1-q}{q}  &=& \frac{2}{27} \left(  (3 n- 2) 4^n - 6n + 2 \right), \\
\sum_{q=0}^{n}  q \, 2^{q} \binom{2n-q}{q}  &=& \frac{2}{27} \left(  (6n -1) 4^n + 6n + 1 \right).
\end{eqnarray*}
\end{lemma}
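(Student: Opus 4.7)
The plan is to prove each identity by induction on $n$ after establishing a first-order linear recurrence that relates the sum at level $n$ to the same sum at level $n-1$ together with the corresponding evaluation from Lemma \ref{lem22}.

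For the first identity, set $T_1(n) := \sum_{q=0}^{n-1} q \cdot 2^{q} \binom{2n-1-q}{q}$ and $S_1(n) := \sum_{q=0}^{n-1} 2^{q} \binom{2n-1-q}{q}$. Starting from the elementary identity $q \binom{2n-1-q}{q} = (2n-1-q)\binom{2n-2-q}{q-1}$, shift the summation index by $q = p+1$ and split the coefficient as $(2n-2-p) = (2n-2) - p$. The two resulting sums are exactly $S_1(n-1)$ and $T_1(n-1)$, producing the recurrence
\[
T_1(n) \;=\; 4(n-1)\, S_1(n-1) \;-\; 2\, T_1(n-1).
\]
Substituting $S_1(n-1) = (4^{n-1}-1)/3$ from Lemma \ref{lem22}, it remains to verify by induction, with base case $T_1(1) = 0$, that the proposed closed form $\tfrac{2}{27}\bigl((3n-2)4^n - 6n + 2\bigr)$ satisfies this recurrence; this is routine algebra.

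For the second identity, set $T_2(n) := \sum_{q=0}^{n} q\cdot 2^{q} \binom{2n-q}{q}$ and $S_2(n) := \sum_{q=0}^{n} 2^{q} \binom{2n-q}{q}$. The same maneuver, now using $q \binom{2n-q}{q} = (2n-q)\binom{2n-q-1}{q-1}$ and splitting $(2n-1-p) = (2n-1) - p$ after the index shift $q = p+1$, yields
\[
T_2(n) \;=\; 2(2n-1)\, S_2(n-1) \;-\; 2\, T_2(n-1),
\]
where $S_2(n-1) = (2\cdot 4^{n-1}+1)/3$ by Lemma \ref{lem22}. Verifying by induction that $\tfrac{2}{27}\bigl((6n-1)4^n + 6n + 1\bigr)$ satisfies this recurrence with base case $T_2(1) = 2$ finishes the argument.

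There is no real obstacle beyond bookkeeping: once one notices that the factor $q$ can be absorbed into the binomial coefficient via $q\binom{N}{q} = N\binom{N-1}{q-1}$, the index shift and linear split instantly reduce the problem to the previous lemma plus a smaller instance of the same sum. The only care needed is in carrying out the constant-coefficient arithmetic when checking that the claimed closed forms solve the recurrences.
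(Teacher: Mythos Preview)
Your argument is correct. The identity $q\binom{N}{q}=N\binom{N-1}{q-1}$ applied with $N=2n-1-q$ (respectively $N=2n-q$), followed by the shift $q=p+1$ and the split of the linear coefficient, yields exactly the recurrences
\[
T_1(n)=4(n-1)S_1(n-1)-2T_1(n-1),\qquad T_2(n)=2(2n-1)S_2(n-1)-2T_2(n-1),
\]
and a direct check confirms that the stated closed forms satisfy these with the base values $T_1(1)=0$, $T_2(1)=2$.

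As for comparison with the paper: the present paper does not prove Lemma~\ref{lem23} at all but simply quotes it from \cite[Lemma~2.3]{ST2}. Your self-contained derivation via a first-order recurrence built on Lemma~\ref{lem22} is therefore an addition rather than an alternative; it has the virtue of being elementary and of making transparent why the factor $27$ appears (from iterating a recurrence with multiplier $-2$ against inhomogeneities of size $4^{n}/3$).
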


We are now ready to prove Theorem \ref{mainthm}. The proof is divided into 2 cases, depending on the parity of the crossing number $c$. 

\subsection{Even crossing number}

We first consider the case $c=2k$, $k \in \BZ$. 
Since $c$ and $\ell$ have the same parity, we may write $\ell = 2l$, $l \in \BZ$. Since $\lceil (\ell+1)/2 \rceil \le m \le \lfloor (c + \ell)/4 \rfloor$, we have 
$l+1 \le m \le \lfloor (k+l)/2 \rfloor$. In particular, $0 \le l \le k-2$. 

Since $\ell$ is even, by Proposition \ref{A1} we have
$$
N_{2k,2l} =  \binom{k+l-1}{2l} \sum_{m=l+1}^{\lfloor (k+l)/2 \rfloor}  \binom{k-l-1}{2m-2l-1}.
$$

We have the following lemma in \cite[$Q$ in Proof of Lemma 3.3]{ST2}.

\begin{lemma} \label{A-simplify}
We have 
$$
\sum_{m=l+1}^{\lfloor (k+l)/2 \rfloor}   \binom{k-l-1}{2m-2l-1} = 
2^{k-l-2}.
$$
\end{lemma}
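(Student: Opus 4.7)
The plan is to reduce the sum to the classical identity that odd-indexed binomial coefficients sum to a power of two, namely $\sum_{j \text{ odd}} \binom{n}{j} = 2^{n-1}$ for $n \ge 1$. This identity follows at once from $(1+1)^n - (1-1)^n = 2\sum_{j \text{ odd}} \binom{n}{j}$, so I would invoke it without proof.

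First I would reindex the sum by setting $j = 2m-2l-1$ and $n = k-l-1$. As $m$ runs through the integers from $l+1$ up to $\lfloor (k+l)/2 \rfloor$, the new variable $j$ runs through odd positive integers starting at $j=1$, with step $2$, giving
\[
\sum_{m=l+1}^{\lfloor (k+l)/2 \rfloor} \binom{k-l-1}{2m-2l-1} \;=\; \sum_{\substack{j \text{ odd}\\ 1 \le j \le j_{\max}}} \binom{n}{j},
\]
where $j_{\max} = 2\lfloor (k+l)/2 \rfloor - 2l - 1$.

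Next I would analyze $j_{\max}$ by parity. If $k+l$ is even, then $n = k-l-1$ is odd and $j_{\max} = n$; if $k+l$ is odd, then $n = k-l-1$ is even and $j_{\max} = n-1$. In both cases, the set of admissible $j$ is exactly the set of odd integers in $[1,n]$ (in the even-$n$ case, $j=n$ would be excluded anyway because it is even). Therefore the sum equals $\sum_{j \text{ odd}, \, 0 \le j \le n} \binom{n}{j}$, and since the conditions $0 \le l \le k-2$ from the preceding discussion guarantee $n \ge 1$, the identity above applies and yields $2^{n-1} = 2^{k-l-2}$.

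The main obstacle is really only parity bookkeeping, making sure that in both parities of $k+l$ the index $j$ exhausts all odd values up to $n$; there is no genuine computation to perform beyond that case split and a single application of the standard binomial identity.
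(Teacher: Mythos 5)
Your proof is correct: the reindexing gives $j_{\max}=n$ when $k+l$ is even (so $n$ odd) and $j_{\max}=n-1$ when $k+l$ is odd (so $n$ even), and in both cases the index set is exactly the odd integers in $[1,n]$, so the standard identity $\sum_{j\ \mathrm{odd}}\binom{n}{j}=2^{n-1}$ (valid since $l\le k-2$ forces $n\ge 1$) finishes the argument. The paper itself only cites this identity from \cite{ST2}, but your method is exactly the one the paper uses to prove the companion Lemma \ref{A-simplify1} (there with the even-index sum $\frac{1}{2}\left[(1+1)^{n}+(1-1)^{n}\right]$), so this is essentially the same approach.
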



The total ``sign change" of two-bridge knots with crossing number $c=2k$ is equal to 
\begin{eqnarray*}
TS(2k) &=&  \sum_{l=0}^{k-2}  2l N_{2k,2l} \\
&=&  \sum_{l=0}^{k-2} 2l \binom{k+l-1}{2l} 2^{k-l-2}\\
&=&   -(k-1) + \sum_{l=0}^{k-1} l \binom{k+l-1}{k-l-1} 2^{k-l-1} \\
 &=&  -(k-1) + (k-1) \sum_{l=0}^{k-1}  2^{k-l-1} \binom{k+l-1}{k-l-1}  - \sum_{l=0}^{k-1}  (k-l-1) 2^{k-l-1} \binom{k+l-1}{k-l-1} \\
 &=&  -(k-1) + (k-1) \sum_{q=0}^{k-1}  2^{q} \binom{2k-2-q}{q}  -  \sum_{q=0}^{k-1}  q \, 2^{q} \binom{2k-2-q}{q} \\
 &=&  -(k-1) + (k-1) \frac{2 \cdot 4^{k-1} +1}{3} -  \frac{2}{27} \left(  (6k-7) 4^{k-1} + 6k-5 \right) \\
&=& \frac{2}{27}  \left( (3k-2) 4^{k-1} - 15k +14 \right)\\
&=& \frac{1}{27} \left( (3c-4) 2^{c-2} - 15c+28\right), \qquad c= 2k.
\end{eqnarray*}

Therefore if $c \equiv 0 \pmod{2}$, then the average ``sign change" of two-bridge knots with crossing number $c$ is equal to 
$$
\overline{\ell}_{c} =  \frac{TS(c)} {TK(c)} = \frac{(3c-4) 2^{c-2} - 15c+28}{9(2^{c-2}-1)} = \frac{3c-4}{9}  - \frac{4c-8}{3(2^{c-2}-1)}.
$$
Note that $\mathrm{braid}(K) = \frac{1}{2} c(K) +1 - \frac{1}{2}  \ell$. Hence 
$$
\overline{\text{braid}}_{c} = \frac{1}{2} c +1 - \frac{1}{2} \overline{\ell}_{c} =\frac{3c+11}{9}  + \frac{2c-4}{3(2^{c-2}-1)}.
$$

\subsection{Odd crossing number} 
We now consider the case $c=2k+1$, $k \in \BZ$. Since $c$ and $\ell$ have the same parity, we may write $\ell = 2l+1$, $l \in \BZ$. Since $\lceil (\ell+1)/2 \rceil \le m \le \lfloor (c + \ell)/4 \rfloor$, we have 
$l+1 \le m \le \lfloor (k+l+1)/2 \rfloor$. In particular, $0 \le l \le k-1$.

Since $\ell$ is odd, by Proposition \ref{A1} we have 
\begin{eqnarray*}
N_{2k+1,2l+1} 
&=& \binom{k+l}{2l+1}  \sum_{m=l+1}^{\lfloor (k+l+1)/2 \rfloor} \binom{k-l-1}{2m-2l-2} \\
&&  + \,    \frac{1+(-1)^{k+l+1}}{2} \binom{(k+l-1)/2}{l} \sum_{m=l+1}^{\lfloor (k+l+1)/2 \rfloor}    \binom{(k-l-1)/2}{m-l-1}.
\end{eqnarray*} 

\begin{lemma} \label{A-simplify1}
We have 
$$\sum_{m=l+1}^{\lfloor (k+l+1)/2 \rfloor}   \binom{k-l-1}{2m-2l-2} = 
\begin{cases}
  2^{k-l-2}  & \text{ if } ~0 \le l \le k-2, \\
  2^{k-l-2}  + 1/2 & \text{ if } ~l = k-1.
\end{cases}
$$
\end{lemma}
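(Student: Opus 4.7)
The plan is to reduce the sum to a standard binomial identity by a change of index. Let $n = k-l-1$ and substitute $j = m - l - 1$ in the summation. The lower bound $m = l+1$ becomes $j = 0$, and the upper bound becomes
\[
j = \lfloor (k+l+1)/2 \rfloor - l - 1 = \lfloor (k-l-1)/2 \rfloor = \lfloor n/2 \rfloor.
\]
So the sum in question rewrites as $\sum_{j=0}^{\lfloor n/2 \rfloor} \binom{n}{2j}$.

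Next I would invoke the classical identity $\sum_{j \ge 0} \binom{n}{2j} = 2^{n-1}$, valid for $n \ge 1$, which follows instantly from adding $(1+1)^n = \sum_i \binom{n}{i}$ and $(1-1)^n = \sum_i (-1)^i \binom{n}{i} = 0$. Since $\binom{n}{2j} = 0$ whenever $2j > n$, the truncated sum $\sum_{j=0}^{\lfloor n/2 \rfloor} \binom{n}{2j}$ equals the full sum $\sum_{j \ge 0} \binom{n}{2j}$. Thus for $n \ge 1$, i.e.\ $0 \le l \le k-2$, the sum equals $2^{n-1} = 2^{k-l-2}$, which is exactly the first case.

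Finally, the boundary case $l = k-1$ forces $n = 0$, in which case only the $j=0$ term survives and gives $\binom{0}{0} = 1$. Writing $1 = \tfrac{1}{2} + \tfrac{1}{2} = 2^{k-l-2} + \tfrac{1}{2}$ (since $2^{k-l-2} = 2^{-1} = \tfrac{1}{2}$) matches the second case. No step is really an obstacle here; the only mild subtlety is checking that the upper limit of the outer sum collapses correctly to $\lfloor n/2 \rfloor$ after re-indexing, and verifying the $n=0$ boundary so that the exceptional $+\tfrac{1}{2}$ corresponds to the failure of the even-binomial identity at $n = 0$.
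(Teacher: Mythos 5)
Your proof is correct and follows essentially the same route as the paper: re-index the sum to $\sum_{j=0}^{\lfloor n/2\rfloor}\binom{n}{2j}$ with $n=k-l-1$, evaluate it via $(1+1)^n$ and $(1-1)^n$, and treat $n=0$ (i.e.\ $l=k-1$) as the boundary case where the extra $\tfrac12$ appears.
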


\begin{proof}
If $0 \le l \le k-2$ then  
\begin{eqnarray*}
\sum_{m=l+1}^{\lfloor (k+l+1)/2 \rfloor} \binom{k-l-1}{2m-2l-2}&=& \sum_{m'=0}^{\lfloor (k-l-1)/2 \rfloor} \binom{k-l-1}{2m'} \\
&=&  \frac{1}{2} \left[ \sum_{m''=0}^{k-l-1} \binom{k-l-1}{m''} + \sum_{m''=0}^{k-l-1} (-1)^{m''}\binom{k-l-1}{m''} \right] \\
&=& \frac{1}{2}  \left( 2^{k-l-1} + 0^{k-l-1} \right) = 2^{k-l-2}.
\end{eqnarray*}
If $l=k-1$ then $\sum_{m=l+1}^{\lfloor (k+l+1)/2 \rfloor} \binom{k-l-1}{2m-2l-2}=1=2^{k-l-2}  + 1/2$. 
\end{proof}

\begin{lemma} \label{A-simplify2}
If $k+l+1$ is even, then 
$$
\sum_{m=l+1}^{\lfloor (k+l+1)/2 \rfloor}    \binom{(k-l-1)/2}{m-l-1} = 2^{(k-l-1)/2}.
$$
\end{lemma}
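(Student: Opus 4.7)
The plan is to reduce the sum to a single instance of the binomial theorem via a straightforward change of summation index. First, observe that the parity hypothesis that $k+l+1$ is even is equivalent to $k-l-1$ being even, which guarantees that $(k-l-1)/2$ is a non-negative integer (recall the standing assumption $0 \le l \le k-1$), so the upper binomial coefficient index $(k-l-1)/2$ makes sense as a non-negative integer. Moreover, under this parity assumption one has $\lfloor (k+l+1)/2 \rfloor = (k+l+1)/2$, so the upper limit of summation is exact.

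Next, I would introduce the new index $m' = m - l - 1$. As $m$ ranges over $l+1 \le m \le (k+l+1)/2$, the index $m'$ ranges over $0 \le m' \le (k+l+1)/2 - (l+1) = (k-l-1)/2$. Setting $n := (k-l-1)/2$, the sum transforms as
\[
\sum_{m=l+1}^{(k+l+1)/2} \binom{(k-l-1)/2}{m-l-1} \;=\; \sum_{m'=0}^{n} \binom{n}{m'}.
\]

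Finally, applying the binomial theorem (i.e., $\sum_{m'=0}^{n} \binom{n}{m'} = 2^n$) yields $2^n = 2^{(k-l-1)/2}$, which is the desired identity. There is no serious obstacle here; the only point requiring mild care is verifying that the parity hypothesis exactly aligns the integer $n$ with both the upper binomial index and the new upper summation bound, which the above change of variables makes transparent.
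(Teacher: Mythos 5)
Your proposal is correct and matches the paper's proof essentially verbatim: both arguments note that the parity hypothesis makes $(k-l-1)/2$ an integer, reindex by $m' = m-l-1$ so the sum becomes $\sum_{m'=0}^{(k-l-1)/2}\binom{(k-l-1)/2}{m'}$, and conclude by the binomial theorem. No issues.
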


\begin{proof}
Since $k+l+1$ is even, $k-l-1$ is also even and 
\[
\sum_{m=l+1}^{\lfloor (k+l+1)/2 \rfloor}    \binom{(k-l-1)/2}{m-l-1} 
=  \sum_{m'=0}^{(k-l-1)/2 } \binom{(k-l-1)/2}{m'} = 2^{(k-l-1)/2}.
\]
\end{proof}

By using these lemmas, the total ``sign change" of two-bridge knots with crossing number $c=2k+1$ is equal to
\begin{eqnarray*}
TS(2k+1) &=& \sum_{l=0}^{k-1} (2l+1) N_{2k+1, 2l+1}\\
&=&  \frac{2l+1}{2} \binom{k+l}{2l+1} \Big|_{l=k-1}  + \sum_{l=0}^{k-1}  (2l+1)  2^{k-l-2} \binom{k+l}{2l+1}  \\
&& + \sum_{\substack{l=0 \\ k + l +1 \text{ is even}}}^{k-1}  (2l+1)  2^{(k-l-1)/2} \binom{(k+l-1)/2}{l}\\
&=&  \frac{2k-1}{2}   +\frac{1}{2} \sum_{l=0}^{k-1}  (2l+1)  2^{k-l-1} \binom{k+l}{2l+1}  \\
&& + \sum_{\substack{l=0 \\ k + l +1 \text{ is even}}}^{k-1}  (2l+1)  2^{(k-l-1)/2} \binom{(k+l-1)/2}{l}.
\end{eqnarray*}

The first sum in the above expression can be simplified as follows
\begin{eqnarray*}
&& \sum_{l=0}^{k-1}  (2l+1)  2^{k-l-1} \binom{k+l}{2l+1}\\
 &=&  \sum_{l=0}^{k-1} (2l+1)  2^{k-l-1}\binom{k+l}{k-l-1} \\
 &=&  (2k-1) \sum_{l=0}^{k-1}  2^{k-l-1} \binom{k+l}{k-l-1} - 2\sum_{l=0}^{k-1}  (k-l-1) 2^{k-l-1} \binom{k+l}{k-l-1} \\
 &=& (2k -1) \sum_{q=0}^{k-1}  2^{q} \binom{2k-1-q}{q} - 2\sum_{q=0}^{k-1}  q \, 2^{q} \binom{2k-1-q}{q} \\
 &=& \frac{(2k-1) (4^k-1)}{3} -  \frac{4}{27} \left(  (3 k- 2) 4^k - 6k + 2 \right) \\
 &=& \frac{1}{27} \left( (6k-1) 4^k + 6k+1 \right) \\
 &=& \frac{1}{27} \left( (3c-4) 2^{c-1} + 3c-2 \right), \qquad c= 2k+1.
\end{eqnarray*}

The second sum  depends on the parity of $k$. 
If $k=2n$, then 
\begin{eqnarray*}
&& \sum_{\substack{l=0 \\ k + l +1 \text{ is even}}}^{k-1}  (2l+1)  2^{(k-l-1)/2} \binom{(k+l-1)/2}{l}   \\
&=& \sum_{p=0}^{n-1}(4p+3) 2^{n-p-1}  \binom{n+p}{2p+1} \\
&=& (4n-1)  \sum_{p=0}^{n-1}  2^{n-p-1} \binom{n+p}{n-p-1}  -  4\sum_{p=0}^{n-1} (n-p-1)\, 2^{n-p-1} \binom{n+p}{n-p-1}  \\
&=& (4n-1)   \sum_{q=0}^{n-1} 2^q \binom{2n-1-q}{q} - 4\sum_{q=0}^{n-1} q \, 2^{q} \binom{2n-1-q}{q} \\
&=& \frac{(4n-1) (4^n-1)}{3} -  \frac{8}{27} \left(  (3 n- 2) 4^n - 6n + 2 \right) \\
 &=& \frac{1}{27} \left( (12n+7) 4^n + 12n-7 \right) \\
&=& \frac{1}{27} \left( (3c+4) \cdot 2^{(c-1)/2} +3c-10 \right), \qquad c = 4n+1 . 
\end{eqnarray*} 
Therefore if $c \equiv 1 \pmod{4}$ then
\begin{eqnarray*}
TS(c) 
&=& \frac{c-2}{2}+ \frac{1}{2} \cdot \frac{1}{27} \left( (3c-4) 2^{c-1} + 3c-2 \right) + \frac{1}{27} \left( (3c+4) \cdot 2^{(c-1)/2} +3c-10 \right) \\
&=& \frac{1}{27} \left( (3 c -4) \cdot 2^{c-2} + (3 c + 4)\cdot 2^{(c-1)/2} + 18c - 38 \right).
\end{eqnarray*}
Hence the average ``sign change" of two-bridge knots with crossing number $c\equiv 1 \pmod{4}$ is equal to 
\begin{eqnarray*}
\overline{\ell}_{c} &=&  \frac{TS(c)} {TK(c)} = \frac{ (3 c -4) \cdot 2^{c-2} + (3 c + 4)\cdot 2^{(c-1)/2} + 18c - 38}{9 (2^{c-2} + 2^{(c-1)/2})} \\ 
&=& \frac{3c-4}{9}  +  \frac{8 \cdot 2^{(c-1)/2} + 18 c - 38}{9 (2^{c-2} + 2^{(c-1)/2})}.
\end{eqnarray*}
Then 
$$
\overline{\text{braid}}_{c} = \frac{1}{2} c +1 - \frac{1}{2} \overline{\ell}_{c} =\frac{3c+11}{9}   - \frac{2^{(c+3)/2} + 9 c - 19}{9 (2^{c-2} + 2^{(c-1)/2})}.
$$

If $k=2n+1$, then 
\begin{eqnarray*}
&&\sum_{\substack{l=0 \\ k + l +1 \text{ is even}}}^{k-1}  (2l+1)  2^{(k-l-1)/2} \binom{(k+l-1)/2}{l} \\
&=& \sum_{p=0}^{n}(4p+1) 2^{n-p} \binom{n+p}{2p} \\
&=& (4n+1)  \sum_{p=0}^{n}  2^{n-p} \binom{n+p}{n-p}   - 4 \sum_{p=0}^{n}  (n-p)\, 2^{n-p} \binom{n+p}{n-p} \\
&=& (4n+1)   \sum_{q=0}^{n}  2^{q} \binom{2n-q}{q}  -  4\sum_{q=0}^{n}   q \, 2^{q} \binom{2n-q}{q}\\
&=& (4n+1) \, \frac{2 \cdot 4^n + 1}{3}-  \frac{8}{27} \left(  (6n -1) 4^n + 6n + 1 \right)\\
 &=& \frac{1}{27} \left( (24n+26) 4^n -12n+1 \right) \\
&=& \frac{1}{27} \left( (3c+4) \cdot 2^{(c-1)/2} -3c+10 \right), \qquad c = 4n+3 . 
\end{eqnarray*} 
Therefore if $c \equiv 3 \pmod{4}$ then
\begin{eqnarray*}
TS(c) 
&=& \frac{c-2}{2}+ \frac{1}{2} \cdot \frac{1}{27} \left( (3c-4) 2^{c-1} + 3c-2 \right) + \frac{1}{27} \left( (3c+4) \cdot 2^{(c-1)/2} - 3c+10\right) \\
&=& \frac{1}{27} \left( (3 c -4) \cdot 2^{c-2} + (3 c + 4)\cdot 2^{(c-1)/2} + 12c - 18 \right).
\end{eqnarray*}
Hence the average ``sign change" of two-bridge knots with crossing number $c \equiv 3 \pmod{4}$ is equal to 
\begin{eqnarray*}
\overline{\ell}_{c} &=&  \frac{TS(c)} {TK(c)} = \frac{ (3 c -4) \cdot 2^{c-2} + (3 c + 4)\cdot 2^{(c-1)/2} + 12c - 18}{9 (2^{c-2} + 2^{(c-1)/2} + 2)} \\ 
&=& \frac{3c-4}{9}  +  \frac{8 \cdot 2^{(c-1)/2} + 6 c - 10}{9 (2^{c-2} + 2^{(c-1)/2}+2)}.
\end{eqnarray*}
Then 
$$
\overline{\text{braid}}_{c} = \frac{1}{2} c +1 - \frac{1}{2} \overline{\ell}_{c} =\frac{3c+11}{9}   - \frac{2^{(c+3)/2} + 3 c - 5}{9 (2^{c-2} + 2^{(c-1)/2}+2)}.
$$

\subsection{Two-bridge knots up to mirror}

In this subsection, a knot is regarded as the same as its mirror image. 
Namely, the following four two-bridge knots are considered as the same knot: 
\begin{align*}
&K([2a_1,2a_2, \ldots, 2a_{2m}]), K([-2a_1,-2a_2, \ldots, -2a_{2m}]), \\
&K([2a_{2m},\ldots,2 a_2, 2a_1]), K([-2a_{2m}, \ldots,- a_2, -2a_1]).
\end{align*}

Ernst and Sumners in \cite{ernstsumners} also gave the number of two-bridge knots with respect to crossing number, up to mirror image. 

\begin{theorem}[Ernst-Sumners \cite{ernstsumners}]\label{thmernstsumnersuptomirror}
For $c \geq 3$, the number of two-bridge knots with $c$ crossings up to mirror image is given by 
\[
TK^*(c) = 
\left\{
\begin{array}{ll}
\displaystyle{\frac{1}{3} (2^{c-3} +  2^{(c-4)/2})} & c \equiv 0 \pmod{4} 
\smallskip\\
\displaystyle{\frac{1}{3} (2^{c-3} +  2^{(c-3)/2})} & c \equiv 1 \pmod{4} 
\smallskip\\
\displaystyle{\frac{1}{3} (2^{c-3} + 2^{(c-4)/2} - 1)} & c \equiv 2 \pmod{4} 
\smallskip\\ 
\displaystyle{\frac{1}{3} (2^{c-3} + 2^{(c-3)/2} + 1)} & c \equiv 3 \pmod{4} 
\end{array}
\right. .
\]
\end{theorem}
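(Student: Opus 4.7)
The plan is to derive this counting formula from Theorem \ref{thmernstsumners} by accounting for the mirror involution. Writing $A(c)$ for the number of amphichiral two-bridge knots with $c$ crossings, Burnside's lemma applied to the $\mathbb{Z}/2\mathbb{Z}$ action of mirroring yields
\[
TK^*(c) = \frac{TK(c) + A(c)}{2}.
\]
Since $TK(c)$ is already supplied, the remaining task is to compute $A(c)$ and verify the stated formulas case by case in $c \pmod 4$.

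To characterize amphichirality, I would use the reduced even continued fraction model. The only identification among representatives is $K([\mathbf{a}]) = K([-\mathbf{a}^{-1}])$, while mirroring sends $K([\mathbf{a}])$ to $K([-\mathbf{a}]) = K([\mathbf{a}^{-1}])$. Hence $K([\mathbf{a}])$ is amphichiral iff $\mathbf{a}$ admits a palindromic representative $a_i = a_{2m+1-i}$. For such a palindrome the middle values $a_m = a_{m+1}$ contribute no sign change, while the two halves contribute identical counts $t'$, so $t(\mathbf{a}) = 2t'$ and the crossing number satisfies $c = 4S - 2t'$ with $S = \sum_{i=1}^{m}|a_i|$. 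This forces $c$ to be even; in particular $A(c) = 0$ for odd $c$, so both odd-$c$ cases of the stated formula reduce to $TK^*(c) = TK(c)/2$, a direct rewrite.

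For even $c$, palindromic representatives are in bijection with first halves $(a_1, \ldots, a_m) \in (\BZ \setminus \{0\})^m$ satisfying $|a_1| + \cdots + |a_m| = S = (c + 2t')/4$ with prescribed sign-change count $0 \le t' \le m-1$. Each such half is counted by a product of two binomials (one for the multiset of absolute values, one for the sign pattern), and summing over $m$ and $t'$ is a direct application of the identities in Lemma \ref{lem22}. The result is expected to be
\[
A(c) = \begin{cases} (2^{(c-2)/2} + 1)/3 & c \equiv 0 \pmod 4, \\ (2^{(c-2)/2} - 1)/3 & c \equiv 2 \pmod 4, \end{cases}
\]
consistent with the small cases $A(4)=1$ (figure-eight), $A(6)=1$ ($6_3$), and $A(8)=3$ (e.g.\ $K(17/4)$, $K(29/12)$, and a third $8$-crossing amphichiral two-bridge knot).

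The main obstacle is ensuring uniqueness of the palindromic representation: one must check that the intrinsic ambiguity $\mathbf{a} \sim -\mathbf{a}^{-1}$, when restricted to palindromes, collapses to $\mathbf{a} \sim -\mathbf{a}$, which admits no nonzero-integer fixed sequences. This guarantees that distinct palindromes give distinct amphichiral knots, so the enumeration is an honest count of $A(c)$. Substituting these values for $A(c)$ together with Theorem \ref{thmernstsumners} into the Burnside formula $TK^*(c) = (TK(c) + A(c))/2$ then yields the four cases of the statement after routine algebraic simplification.
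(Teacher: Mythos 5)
The paper offers no proof of this statement: like Theorem \ref{thmernstsumners}, it is imported verbatim from Ernst--Sumners \cite{ernstsumners}, so there is no in-paper argument to compare against. Your derivation is correct and is a genuine (re)proof rather than a citation. The Burnside reduction $TK^*(c) = \tfrac12\bigl(TK(c)+A(c)\bigr)$ is right, and your characterization of amphichirality is sound, granted one fact the paper states only in one direction: that $\mathbf{a}\mapsto -\mathbf{a}^{-1}$ is the \emph{only} ambiguity in the reduced even continued fraction of a two-bridge knot (this standard uniqueness should be cited explicitly). From it, amphichirality is equivalent to admitting a palindromic representative, the alternative $\mathbf{a}=-\mathbf{a}$ being vacuous; and since an amphichiral knot has exactly the two palindromic representatives $\mathbf{a}$ and $-\mathbf{a}$, the raw palindrome count must be halved, which you correctly flag. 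The enumeration closes up as you predict: the number of palindromes with $c=4S-2t'$ is $2\sum_{m}\binom{S-1}{m-1}\binom{m-1}{t'}=2\binom{S-1}{t'}2^{S-1-t'}$, and summing over admissible $t'$ (even for $c\equiv 0$, odd for $c\equiv 2 \pmod 4$) via Lemma \ref{lem22} yields $A(c)=\tfrac13\bigl(2^{(c-2)/2}+1\bigr)$ resp.\ $\tfrac13\bigl(2^{(c-2)/2}-1\bigr)$, while palindromes force $c$ even so $A(c)=0$ for odd $c$; these values match $2TK^*(c)-TK(c)$ in Table \ref{table_averagebraid} for all listed $c$, and the final algebra reproduces all four displayed cases. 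The payoff of your route over simply quoting \cite{ernstsumners} (whose original count proceeds through Schubert's classification and positive continued fraction expansions) is that it stays entirely within the reduced even continued fraction normal form and the Lemma \ref{lem22} identities already used in Section \ref{avg}, at the cost of still relying on Theorem \ref{thmernstsumners} as an external input.
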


By a similar argument, we obtain the average braid index of two-bridge knots with respect to crossing number, up to mirror image. 

\begin{theorem}\label{mainthm2}
For $c \geq 3$, we have
\[
\overline{\emph{braid}}_c^* =
\left\{
\begin{array}{ll}
\displaystyle{\frac{3c+11}{9}  + \frac{2^{c/2} + 9 c - 16}{9 (2^{c-2} + 2^{(c-2)/2})}} & c \equiv 0 \pmod{4} 
\smallskip \\
\displaystyle{\frac{3c+11}{9}   - \frac{2^{(c+3)/2} + 9 c - 19}{9 (2^{c-2} + 2^{(c-1)/2})}} & c \equiv 1 \pmod{4} 
\smallskip\\
\displaystyle{\frac{3c+11}{9}  + \frac{2^{c/2} + 3 c - 8}{9 (2^{c-2} + 2^{(c-2)/2}-2)}} & c \equiv 2 \pmod{4} 
\smallskip \\
\displaystyle{\frac{3c+11}{9}   - \frac{2^{(c+3)/2} + 3 c - 5}{9 (2^{c-2} + 2^{(c-1)/2}+2)}} & c \equiv 3 \pmod{4}
\end{array}
\right. .
\]
In particular,  $\overline{\emph{braid}}_c^* \sim \displaystyle{\frac{c}{3} + \frac{11}{9}}$ as $c \to \infty$.
\end{theorem}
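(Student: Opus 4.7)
The strategy is a Burnside-type orbit count. Let the group $G \cong (\mathbb{Z}/2)^2$ generated by reversal $\sigma : \mathbf{a} \mapsto \mathbf{a}^{-1}$ and negation $\tau : \mathbf{a} \mapsto -\mathbf{a}$ act on the set $\mathcal{S}_c$ of reduced even continued fractions of crossing number $c$. By the equivalences recalled in Section \ref{prelim}, two-bridge knots correspond to $\langle \sigma\tau \rangle$-orbits, and two-bridge knots up to mirror correspond to $G$-orbits. The fixed set of $\tau$ is empty, that of $\sigma$ consists of the palindromic sequences ($\mathbf{a} = \mathbf{a}^{-1}$), and that of $\sigma\tau$ consists of the anti-palindromic sequences ($\mathbf{a} = -\mathbf{a}^{-1}$). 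Since the sign change $\ell$ is $G$-invariant, a weighted Burnside argument gives
\[
TK^*(c) = \frac{1}{2}TK(c) + \frac{1}{4}P_c, \qquad S^*(c) = \frac{1}{2}TS(c) + \frac{1}{4}TS_{\mathrm{pal}}(c),
\]
where $P_c$ counts palindromic sequences of crossing number $c$, $TS_{\mathrm{pal}}(c)$ is their total sign change, and $TS(c)$, $TK(c)$ are as in Section \ref{avg}; the sought average is then $\overline{\text{braid}}_c^* = c/2 + 1 - S^*(c)/(2 TK^*(c))$.

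In a palindromic sequence the middle entries $a_m, a_{m+1}$ coincide, so $\ell = 2\ell_1$ is even, where $\ell_1$ is the number of sign changes in the half $(a_1, \dots, a_m)$. Hence $P_c = TS_{\mathrm{pal}}(c) = 0$ whenever $c$ is odd, so $TK^*(c) = TK(c)/2$ and $S^*(c) = TS(c)/2$, giving $\overline{\text{braid}}_c^* = \overline{\text{braid}}_c$. This recovers the $c \equiv 1, 3 \pmod 4$ cases of Theorem \ref{mainthm2} directly from Theorem \ref{mainthm}.

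For even $c$, the count $P_c = 4TK^*(c) - 2TK(c)$ is read off from Theorems \ref{thmernstsumners} and \ref{thmernstsumnersuptomirror}, yielding $\frac{2}{3}(2^{(c-2)/2} + 1)$ if $c \equiv 0 \pmod 4$ and $\frac{2}{3}(2^{(c-2)/2} - 1)$ if $c \equiv 2 \pmod 4$. The main task is a closed form for $TS_{\mathrm{pal}}(c)$. Parameterizing palindromic sequences by their half $(a_1, \dots, a_m)$ (any nonzero integer tuple), one has $c = 4S - 2\ell_1$ with $S = \sum|a_i|$, and the count of halves of length $m$ with prescribed $S$ and $\ell_1$ is $2\binom{m-1}{\ell_1}\binom{S-1}{m-1}$, so
\[
TS_{\mathrm{pal}}(c) = \sum_{\ell_1} 4\ell_1 \sum_m \binom{m-1}{\ell_1}\binom{(c+2\ell_1)/4 - 1}{m-1},
\]
where integrality of $S = (c+2\ell_1)/4$ forces $\ell_1$ even when $c \equiv 0 \pmod 4$ and odd when $c \equiv 2 \pmod 4$. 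The inner sum collapses by the classical identity $\sum_j \binom{j}{k}\binom{N}{j} = \binom{N}{k}2^{N-k}$, reducing the outer sum to the shape $\sum_q (a + bq)\, 2^q \binom{2n-q}{q}$, evaluated by Lemmas \ref{lem22} and \ref{lem23}. The parity split between the two subcases $c \equiv 0, 2 \pmod 4$, which selects which formula of each lemma applies, is the principal technical obstacle, paralleling the odd crossing number case in the proof of Theorem \ref{mainthm}.

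Combining the $TS(c)$ of Theorem \ref{mainthm} with the resulting closed form for $TS_{\mathrm{pal}}(c)$ produces $S^*(c)$, and substitution into $\overline{\text{braid}}_c^* = c/2 + 1 - S^*(c)/(2 TK^*(c))$ using $TK^*(c)$ from Theorem \ref{thmernstsumnersuptomirror} yields the two remaining cases of Theorem \ref{mainthm2} after routine algebraic simplification. The asymptotic $\overline{\text{braid}}_c^* \sim c/3 + 11/9$ is then immediate, since the correction terms decay as $O(c\cdot 2^{-c/2})$.
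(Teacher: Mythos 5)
Your proposal is correct, and since the paper only sketches this proof (``by a similar argument''), it is worth noting what your organization buys. The paper's implicit route is to redo the computation of Theorem \ref{mainthm} with a mirror-quotiented analogue of Proposition \ref{A1}; your weighted Burnside decomposition $TK^*(c) = \tfrac{1}{2}TK(c) + \tfrac{1}{4}P_c$ and $S^*(c) = \tfrac{1}{2}TS(c) + \tfrac{1}{4}TS_{\mathrm{pal}}(c)$ (valid because $\mathrm{Fix}(\tau)=\emptyset$ and the anti-palindrome contributions cancel against $2TK(c)-|\mathcal{S}_c|$ and $2TS(c)-TS_{\mathrm{all}}(c)$) isolates exactly the new work. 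This makes the odd cases immediate --- palindromes force $\ell$ even, hence $P_c = TS_{\mathrm{pal}}(c)=0$ for odd $c$ and $\overline{\mathrm{braid}}_c^* = \overline{\mathrm{braid}}_c$, which matches the fact that the paper's $c\equiv 1,3 \pmod 4$ formulas in Theorems \ref{mainthm} and \ref{mainthm2} coincide --- and reduces the even cases to the single palindrome sum, with $P_c$ read off from Ernst--Sumners rather than recomputed. I checked your ingredients numerically: e.g.\ for $c=8$ your parameterization gives $P_8=6$ and $TS_{\mathrm{pal}}(8)=8$, so $TK^*(8)=\tfrac{21}{2}+\tfrac{3}{2}=12$ and $S^*(8)=22+2=24$, agreeing with Table \ref{table_averagebraid}; the half-count $2\binom{m-1}{\ell_1}\binom{S-1}{m-1}$, the constraint $c=4S-2\ell_1$ with its parity consequence for $\ell_1$, and the reduction via $\sum_j\binom{j}{k}\binom{N}{j}=\binom{N}{k}2^{N-k}$ to the sums of Lemmas \ref{lem22} and \ref{lem23} are all as you state. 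The one hypothesis you use silently --- that reduced even continued fractions of a given crossing number form a free-enough $G$-set so that knots are exactly $\langle\sigma\tau\rangle$-orbits and mirror classes are $G$-orbits --- is the same uniqueness fact already baked into Proposition \ref{A1}, so this is no loss of rigor relative to the paper.
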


\begin{proof}[Sketch of Proof]
For $c \geq 3$, we get the total ``sign change" of two-bridge knots with respect to crossing number up to mirror image, which is denoted by $TS^*(c)$ 
in a similar way:

\[
TS^*(c) =
\left\{
\begin{array}{ll}
\displaystyle{\frac{1}{54} \left( (3 c -4) \cdot 2^{c-2} + (3 c - 8)\cdot 2^{(c-2)/2} - 18c + 32 \right)} & c \equiv 0 \pmod{4} 
\smallskip\\
\displaystyle{\frac{1}{54} \left( (3 c -4) \cdot 2^{c-2} + (3 c + 4)\cdot 2^{(c-1)/2} + 18c - 38 \right)} & c \equiv 1 \pmod{4}
\smallskip \\
\displaystyle{\frac{1}{54} \left( (3 c -4) \cdot 2^{c-2} + (3 c - 8)\cdot 2^{(c-2)/2} - 12c + 24 \right)} & c \equiv 2 \pmod{4}
\smallskip \\ 
\displaystyle{\frac{1}{54} \left( (3 c -4) \cdot 2^{c-2} + (3 c + 4)\cdot 2^{(c-1)/2} + 12c - 18 \right)} & c \equiv 3 \pmod{4} 
\end{array}
\right. .
\]

By the equality $\overline{\text{braid}}_{c}^* = \frac{1}{2} c +1 - \frac{1}{2} \cdot \frac{TS^*(c)}{TK^*(c)}$, 
we obtain the desired statement. 
\end{proof}

As a final example, we provide Table \ref{table_averagebraid} which shows the values of $TK(c)$, $TS(c)$, and $\overline{{\rm braid}}_c$. 

\renewcommand{\arraystretch}{1.3}
\begin{table}[ht]
\begin{tabular}{c||c|c|c|c|c|c|c|c|c|c|c|c|c}
$c$ & 3 & 4 & 5 & 6 & 7 & 8 & 9 & 10 & 11 & 12 & 13 & 14 & 15  \\
\hline
\hline
$TK(c)$ & 2 & 1 & 4 & 5 & 14 & 21 & 48 & 85 & 182 & 341 & 704 & 1365 & 2774  \\
\hline
$TS(c)$ & 2 & 0 & 8 & 6 & 30 & 44 & 132 & 242 & 598 & 1208 & 2764 & 5758 & 12678  \\
\hline
$\overline{{\rm braid}}_c$ & $2$ & $3$ & $\frac{5}{2}$ & $\frac{17}{5}$ & $\frac{24}{7}$ & $\frac{83}{21}$ 
& $\frac{33}{8}$ & $\frac{389}{85}$ & $\frac{34}{7}$ & $\frac{1783}{341}$ & $\frac{1949}{352}$ & $\frac{8041}{1365}$ & $\frac{8620}{1387}$  \\
\hline
\hline
$TK^*(c)$ & 1 & 1 & 2 & 3 & 7 & 12 & 24 & 45 & 91 & 176 & 352 & 693 & 1387 \\
\hline
$TS^*(c)$ & 1 & 0 & 4 & 4 & 15 & 24 & 66 & 128 & 299 & 620 & 1382 & 2920 & 6339 \\
\hline
$\overline{{\rm braid}}_c^*$ & 2 & 3 & $\frac{5}{2}$ & $\frac{10}{3}$ & $\frac{24}{7}$ & $4$ 
& $\frac{33}{8}$ & $\frac{206}{45}$ & $\frac{34}{7}$ & $\frac{461}{88}$ & $\frac{1949}{352}$ & $\frac{4084}{693}$ & $\frac{8620}{1387}$ \\
\end{tabular}
\bigskip
\caption{$\overline{{\rm braid}}_c$ and $\overline{{\rm braid}}_c^*$ }
\label{table_averagebraid}
\end{table}
\renewcommand{\arraystretch}{1}

\section*{Acknowledgements}

The first author is partially supported by KAKENHI grant No.\ 20K03596 and 21H00986 
from the Japan Society for the Promotion of Science.  The second author is supported by a grant from the Simons Foundation (\#708778).

\end{document}